\documentclass[a4paper,12pt,leqno]{amsart}
\usepackage{amsmath,amsthm,amssymb,latexsym,epsfig,graphicx,subfigure}
\setlength{\textheight}{22 cm} \setlength{\textwidth}{16 cm}
\setlength{\oddsidemargin}{0.5cm}\setlength{\evensidemargin}{0.5cm} \setlength{\topmargin}{0cm}
\setlength{\headheight}{1cm} \setlength{\marginparwidth}{6.5cm}

\numberwithin{equation}{section} \numberwithin{figure}{section}

\newtheorem{thm}{Theorem}[section]
\theoremstyle{definition}
\newtheorem{lm}[thm]{Lemma}

\newtheorem{corollary}[thm]{Corollary}
\theoremstyle{definition}

\newtheorem{pr}[thm]{Proposition}
\theoremstyle{definition}
\newtheorem{df}[thm]{Definition}
\theoremstyle{definition}
\newtheorem{rem}[thm]{Remark}

\newcommand{\Rd}{\mathbb{R}^{d}}

\newcommand{\Rn}{\mathbb{R}^{n}}
\newcommand{\R}{\mathbb{R}}

\newcommand{\N}{\mathbb{N}}

\newcommand{\C}{\mathbb{C}}

\renewcommand{\S}{\mathcal{S}}

\newcommand{\ha}{\mathcal{H}}
\newcommand{\h}{\mathcal{H}}

\newcommand{\s}{\sigma}
\newcommand{\tmu}{\tilde{\mu}}
\newcommand{\om}{w}
\renewcommand{\O}{\Omega}

\def\sg{\sigma}
\newcommand{\stm}{\setminus}
\renewcommand{\o}{\Omega}

\DeclareMathOperator{\spv}{pv_{{\tt sy}}}
\DeclareMathOperator{\pv}{pv}

\newcommand{\e}{\varepsilon}

\newcommand{\cn}{C_{v|_n}}
\newcommand{\cnm}{C_{v|_{n+m_1 |w|}}}
\newcommand{\cnl}{C_{v|_{n+l |w|}}}
\newcommand{\ra}{\rightarrow}
\newcommand{\khii}{\text{\lower -.4ex\hbox{$\chi$}}}
\DeclareMathOperator{\spt}{spt} \DeclareMathOperator{\dist}{dist}
 
 \DeclareMathOperator{\diam}{diam}

\newcommand{\tsy}{T_{{\tt sy}}^\ast}
\renewcommand{\t}{\tau}
\newcommand{\f}{\Phi}
\renewcommand{\a}{\alpha}

\DeclareMathOperator{\re}{Re}

\begin{document}
\title{Homogeneous kernerls and self similar sets}
\author{Vasilis Chousionis}
\address{Department of Mathematics \\ University of Illinois \\ 1409
  West Green St. \\ Urbana, IL 61801}
\email{vchous@math.uiuc.edu}

\author{Mariusz Urba\'nski}
\address{Department of Mathematics \\University of North Texas\\ General Academics Building 435
  \\ 1155 Union Circle \#311430 \\ Denton, TX 76203-5017}
\email{urbanski@unt.edu}

\subjclass[2010]{Primary 32A55, 30L99} 
\keywords{Singular integrals, self similar sets, real analyticity, metric spaces}

\begin{abstract} 
We consider singular integrals associated to homogeneous kernels on self similar sets. Using ideas from Ergodic Theory we prove, among other things, that in Euclidean spaces the principal values of singular integrals associated to real analytic, homogeneous kernels fail to exist a.e. on self similar sets satisfying some separation conditions. Furthermore in general metric groups, using similar techniques, we generalize a criterion of $L^2$-unboundedness for singular integrals on self similar sets.
\end{abstract}

\maketitle
\section{Introduction}
The singular integrals, with respect to the Lebesgue measure in $\Rd$,
$$T(f)(x)=\int \frac{\Omega(x-y)}{|x-y|^{d}}f(y)dy$$
where $\Omega$ is a homogeneous function of degree $0$ have been studied extensively, see e.g. \cite{St}, being one of the standard topics in classical harmonic analysis. On the other hand if the singular integral is acting on general lower dimensional measures the situation is much more complicated even when one considers the simplest of kernels. The topic is deeply connected to geometric measure theory as it turns out that basic properties of singular integrals such as boundedness in $L^2$ and a.e. existence of principal values depend on the geometric structure of the underlying measure.

In a series of innovative works, see e.g. \cite{DS1} and \cite{DS2}, David and Semmes developed the theory of uniform rectifiability for the geometric study of singular integrals in $\Rd$ on Ahlfors-David regular (AD-regular, for short) measures, that is, measures $\mu$ satisfying
 \begin{equation*}
 \label{ad}
\frac{r^n}{C} \leq \mu(B(z,r)) \leq C r^n \text{ for }  z \in \operatorname{spt}\mu
\text{ and } 0<r<{\rm diam}(\operatorname{spt}(\mu)),
\end{equation*}
for some fixed constant $C$.
Roughly speaking, given a sufficiently nice kernel $k$ David and Semmes
intended to find geometric conditions that characterize the AD-regular measures $\mu$  for which the singular integrals $T_{k, \mu}$ are bounded in $L^2(\mu)$. To this end they introduced the novel concept of  uniform rectifiability which can be realized as rectifiability with some quantitative conditions. Recall that $n$-rectifiable  sets in $\Rd$ are contained, up to an $\ha^n$-negligible set, in a countable union  of $n$-dimensional Lipschitz graphs. Here $\ha^n$ stands for the $n$-dimensional Hausdorff measure.

When one assumes that the measure $\mu$ has sufficiently nice structure the situation is pretty well understood. David in \cite{d} proved that if $\mu$ is $n$-uniformly rectifiable any convolution kernel $k:\mathbb{R}^{d} \setminus \{0\}\rightarrow \mathbb{R}$
such that for all $x\in \mathbb{R}^{d}\setminus \{0\}$,
\begin{equation}
\label{ker}
k(-x)=-k(x)\; \ \text{ and }\ \;\left| \nabla^j k(x)\right| \leq c_j \left| x\right| ^{-n-j}, \ \text{ for}\ j=0,1,2,\dots
\end{equation} defines a bounded operator in $L^2(\mu)$. Later on, Tolsa in \cite{tun} relaxed the smoothness assumption up to $j=2$. Furthermore Verdera in \cite{Ve} proved that if the measure $\mu$ is $n$-rectifiable the principal values with respect to any odd kernel satisfying (\ref{ker}) exist $\mu$-a.e.

On the other hand, much less is known when one assumes $L^2$-boundedness or $\mu$-a.e existence of principal values and wishes to derive information about the geometric structure of $\mu$. David and Semmes in \cite{DS1} proved that the
$L^2(\mu)$-boundedness of all singular integrals in the class
described above forces the measure $\mu$ to be $n$-uniformly
rectifiable. Naturally one might ask what happens if in contrast to the previously mentioned result we only assume the boundedness of a single operator. Even for the $n$-dimensional Riesz kernels, $x /|x|^{n+1}, \ x \in \Rd \stm \{0\}$, the question, which is frequently referred to as the David-Semmes conjecture, remains partially unresolved.

In \cite{MMV}, Mattila, Melnikov and Verdera settled the David and Semmes question in the case of the Cauchy transform, that is for $n=1$. It is a remarkable fact that their proof depends crucially on a special subtle positivity property of the Cauchy kernel related to an old notion of curvature named after Menger. Recently, in a very deep work, Nazarov, Tolsa and Volberg \cite{ntov},  gave an affirmative answer to the David-Semmes conjecture in the case of the $(d-1)$-dimensional Riesz kernels. The conjecture remains open for $1<n<d-1$. 

Very little is known for other homogeneous kernels. In \cite{CMPT} the kernels $\re (z)^{2n-1} / |z|^{2n},$ $z \in \C, n \in \N,$ were considered and it was proved that the $L^2$-boundedness of the operators associated with any of these kernels implies rectifiability. Recently in \cite{CPr} the aforementioned result was extended to Euclidean spaces of arbitrary dimension. By now, these are the only known examples of convolution kernels not directly related to the Riesz kernels with this property. It is of interest that there exist some examples of homogeneous kernels in the plane whose boundedness does not imply rectifiability, see \cite{hu} and \cite{JN}. 

Mattila and Preiss proved in \cite{Mpr} that the $\mu$-a.e. existence of the principal values of the $n$-dimensional Riesz transforms implies $n$-rectifiability.
Huovinen in \cite{huothesis} considered the vectorial kernels $z^{2n-1} / |z|^{2n}, \, z \in \C, n \in \N,$ and proved that the $\mu$-a.e. existence of their principal values implies rectifiability. The same holds true for the kernels $\re (z)^{2n-1} / |z|^{2n}, \, z \in \C, n \in \N,$ considered in \cite{CMPT}.

It becomes clear that our knowledge restricts to a few particular examples of kernels.  Our goal in this paper is to prove, under certain restrictions, a general result. The idea is that given any sufficiently nice kernel it should behave badly on measures with sufficiently irregular geometric structure. In Theorem \ref{divpv} we prove that given any $s$-homogeneous real analytic kernel its principal values do not exist $\mathcal{H}^s$-a.e. in $C \subset \Rd$ if $C$ is a strongly separated, rotation free $s$-dimensional self-similar set. Furthermore in Theorem \ref{maxunb}, without even assuming strong separation for $C$, we prove that the corresponding maximal operator is unbounded in $L^\infty(\mathcal{H}^s \lfloor C)$. As a toy example the reader can have in mind the action of any kernel of the form $P(x)/|x|^{n+1}, x \in \R^2,$ where $P$ is an $n$-homogeneous polynomial, on the $1$-dimensional $4$-corners Cantor set in the plane.

In our proofs we make use of the fact that the zero set of any non-trivial real analytic function in $\Rd$ is contained in a countable union of $C^1$ manifolds of dimension at most $d-1$. Combined with Mattila's geometric rigidity theorem for self similar sets, see Theorem \ref{matss}, it allows us to prove that certain truncated integrals with respect to the real analytic kernel do not vanish on a set of positive $\mathcal{H}^s$-measure. A key and novel ingredient in our proof is the use of ideas and tools from Ergodic theory, especially suitable applications of Birkhoff's Ergodic Theorem. To our knowledge this is the first time that Birkhoff's Ergodic Theorem is being used in the context of singular integral operators.

Furthermore we use again Birkhoff's Ergodic Theorem in order to prove a criterion for unboundedness of homogeneous singular integrals on self similar sets of metric groups. This criterion was first obtained in \cite{CM}, with a quite different argument, under stronger separation conditions for the similarities. Motivation for the study of singular integrals in metric groups comes from the study of removable sets for Lipschitz $\mathcal{L}$-harmonic functions in Carnot groups, where $\mathcal{L}$ is the sub-Laplacian. It was proved in \cite{CM} that the critical dimension for such removable sets in the Heisenberg group $\mathbb{H}^n$ is $2n+1$ and the criterion for unboundedness was employed to prove the existence of removable self-similar sets with positive and finite $\mathcal{H}^{2n+1}$-measure.

The paper is organised as follows. In Section \ref{sec:notset} we lay down all the necessary notation and definitions regarding self similar iterated functions sytems and singular integrals in the general framework of complete metric groups. In Section \ref{sec:realanal} we consider singular integrals with respect to real analytic kernels in Euclidean spaces. In Section \ref{sec:unb} we prove a criterion for unboundedness of singular integrals on self similar sets of metric groups. Finally in Section \ref{sec:pert} we prove in Euclidean spaces the collection of homogeneous kernels that define unbounded operators on self similar sets is dense in the $C^r$ topology of homogeneous kernels.

\section{Notation and Setting}\label{sec:notset}
Let $(G,d)$ be a complete  metric group with the following properties:
\begin{enumerate}
\item The left translations $\tau_q:G \ra G$, 
$$ \tau_q(x)= q\cdot x, \ x \in G,$$
are isometries for all $q \in G.$
\item There exist dilations $\delta_r:G \ra G, \ r>0,$
which are continuous group homomorphisms for which,
\begin{enumerate}
\item $\delta_1=$ identity,
\item $d(\delta_r(x),\delta_r(y))=rd(x,y)$ for $x,y \in G,r>0$,
\item $\delta_{rs}=\delta_r \circ \delta_s$.
\end{enumerate}

It follows that for all $r>0$, $\delta_r$ is a group isomorphism with $\delta_r^{-1}=\delta_{\frac{1}{r}}$.
\end{enumerate}
The closed and open balls with respect to $d$ will 
be denoted by $B(p,r)$ and $U(p,r)$. By $\diam(E)$ we will denote the diameter of $E \subset G$ with respect to the metric $d$.

Let $E$ be a finite set called
in the sequel an alphabet. Without loss of generality we can assume that $E=\{1,\dots,N\}$ for some $N \in \N$. Let 
$$
\sg: E^\mathbb{N} \to E^\mathbb{N}
$$   
be the  shift map, i.e. cutting off the first coordinate. It is given by the
formula
$$  
\sg ( (w_n)^\infty_{n=1}  ) = ( (w_{n+1})^\infty_{n=1}  ).
$$
We also set
$$
E^*=\bigcup_{n=0}^\infty E^n.
$$
For every $\om \in E^*$, by $|\om|$  we mean the only integer
$n \geq 0$ such that $\om \in E^n$. We  call $|\om|$ the length of
$\om $. If $v \in E^*,\ \om \in E^\N$ and $n, m \geq 1$, we put
\begin{equation*}
\begin{split}
\om |_{n}&=\om_1\ldots \om_n\in E^n,\\
\om|_{m,\dots, m+n}&=\om_{m+1}\ldots \om_{m+n}\in E^n,\\
vw&=(v_1, \dots,v_{|v|},w_1,\dots)\in E^\N,\\
v^n&=(v_1,\dots,v_{|v|},\dots\dots,v_1,\dots,v_{|v|}) \in E^n,\\
v^\infty&=vv\dots \in E^\N.
\end{split}
\end{equation*} For every  $v \in E^*$, we denote the corresponding cylinder by 
$$
[v]:=\{\tau \in E^\mathbb{N}:\,\, \tau_{|_{|v|}}=v \},
$$ 
and if $A \subset E^\N$ we put
$$v \circ A=\{v\a: \a \in A\}.$$

Let $\mathcal{S}=\{S_i\}_{i \in E},$  be an iterated function system
(IFS) of similarities. This means that 
\begin{equation}
\label{simi}
d(S_i(x),S_i(y))=r_i d(x,y)
\end{equation}
with some $r_i \in (0,1)$ for all $i \in E$. The self-similar set $C$
is the invariant set 
with respect to $\mathcal{S}$, that is, the unique non-empty compact
set such that 
$$
C=\bigcup_{i \in E }S_i (C).
$$

We say that $\S$ satisfies the \textit{open set condition} (OSC) if
there exists some non-empty open set $O$ such that  
\begin{enumerate}
\item $S_i(O)\subset O$ for all $i\in E$,
\item $S_i(O) \cap S_j(O)=\emptyset$ for all $i \neq j \in E$.
\end{enumerate} 
If furthermore $O \cap C \neq \emptyset$ we say that $\S$ satisfies
the \textit{strong open set condition} (SOSC). Finally $\S$ is called
\textit{separated} if  
$$S_i (C) \cap S_j(C)=\emptyset\text{ for all }i,j
\in E \text{ with } i\ne j.$$  This equivalently means that
there exists some non-empty open set $O$ satisfying (i) and (ii) and
also (ii) with $O$ replaced by the closure of $O$.

Given any word $w=(w_1,\dots,w_n) \in E^\ast$ we  adopt the following
conventions: 
$$S_w:=S_{w_1}\circ \dots \circ S_{w_n}\ \text{and} \ C_w=S_w(C).$$
The periodic points of $\mathcal{S}$ are exactly those $x \in C$ such
that $S_w(x)=x$ for some $w \in E^\ast$. In this case 
$$\{x\}= \bigcap_{k=1}^\infty S_{w^k} (C).$$
We also define the coding map $\pi :E^{\N} \ra C$ by
$$\{\pi (w)\}= \cap_{n=1}^\infty S_{w |_n} (C).$$

We denote by $\mathcal{H}^s,s\geq 0,$ the $s$-dimensional Hausdorff
measure obtained from the metric $d$, i.e. for $E \subset G$ and
$\delta >0$, $\mathcal{H}^s (E)=\sup_{\delta>0} \mathcal{H}^s_\delta
(E)$, where 
$$
\mathcal{H}^s_\delta(E)=\inf \left\{\sum_i  \diam(E_i)^s: E \subset
  \bigcup_i E_i,\diam (E_i)<\delta \right\}.
$$

It follows by a general result of Hutchinson in \cite{Hut} that
whenever $\S$ is a finite set of similarities in $\Rd$ which satisfies
the OSC, 
$$0<\mathcal{H}^s(C)< \infty \ \text{ for } \ \sum_{i=1}^N r_i^s=1,$$ 
and the measure $\mathcal{H}^s \lfloor C$ is $s$-AD regular. Here
$\mathcal{H}^s \lfloor C$ stands for the restriction of the
$s$-dimensional Hausdorff measure on $C$. The real number $s$ is frequently called the similarity dimension of $\S$. In complete metric spaces the OSC does not always imply that the limit set has positive and finite $\ha^s$ measure. Nevertheless it holds true under some extra assumptions on the group $G$, see Section \ref{unb} for more details. We also remark that if $\S$ is separated it always generates a limit set with $0<\ha^s(C)<\infty$. 

From now on, unless
otherwise stated, we will denote $\mu =\mathcal{H}^s(C)^{-1}\mathcal{H}^s \lfloor C$. It follows, see e.g. \cite{Hut} that
$$\mu=\tmu \circ \pi^{-1}$$
where $\tmu$ is the canonical product measure in $E^\N$,
$$\tmu=\otimes_\N \left( \sum_{i \in E} r_i^s \epsilon_i\right)$$
where $\epsilon_i$ denotes the Dirac measure at $i \in E$. 

We will consider the following class of kernels.
\begin{df}
\label{hom} For $s>0$ the $s$-homogeneous kernels are of the form,
$$k (x,y)=\frac{\o(x^{-1}\cdot y)}{d(x,y)^s},\ x,y \in G \setminus \{(x,y):x=y\},$$
where $\o: G \ra \R$  is a not identically vanishing, continuous and
homogeneous function of degree zero, where $0$-homogeneity means that,  
$$\o(\delta_r(x))=\o(x)\ \text{for all} \ x\in G,r>0.$$
\end{df}

The truncated singular integral operators associated to $\mu$ and $k$ are defined for $f\in L^1(\mu)$ and $\e>0$ as,
$$T_\e(f)(y)=\int_{G \setminus B(x,\e)} k(x,y)f(y)d\mu (y),$$
and the maximal singular integral is defined as usual,
$$T^*(f)(x)=\sup_{\e>0}|T_\e(f)(x)|.$$

We say that the principal values with respect to  $k$ and $\mu$ exist for $x \in \spt \mu$ if the limit  
$$ \pv T (x):=\lim _{\e \ra 0} \int_{G \stm B(x,\e)} k(x,y) d \mu (y) $$ 
exists and it is finite.

We also introduce symbolic principal values and symbolic maximal singular integral operators.

\begin{df}
\label{sympv} Let $\mathcal{S}$ be  a set of separated similarities
and let $C$ be the corresponding $s$-dimensional self similar set. We
say that the \textit{symbolic principal values} with respect to a
kernel $k$ and $\mu=\mathcal{H}^s \lfloor C$ exist for $w \in E^\N$ if
the limit  
$$ \spv T (\pi (w)):=\lim _{k \ra \infty} \int_{C \stm C_{w|_k}} k(\pi
(w),y) d \mu (y) $$ 
exists and it is finite.
\end{df}

\begin{df}
\label{symmax}
Let $\mathcal{S}$ be a set of similarities satisfying the open set
condition  which generats a limit set $C$ such that $0<\ha^s(C)<\infty$. We define the  \textit{symbolic maximal singular
  operator} with respect to $k$ and $\mu=\mathcal{H}^s \lfloor C$ as 
$$T_{{\tt sy}}^\ast (f) (w)=\sup_{\substack{ m<n\\m,n \in \N }}
\left|\int_{C_{w|_m} \setminus C_{w|_n}}^\ast k(\pi(w),y)f(y) d \mu (y)
\right|$$ 
for $f \in L^1(\mu)$. Here we denote  $\int^\ast g d \mu
=\begin{cases} \int g d\mu &\mbox{if } \int g d \mu < \infty \\  
0 & \mbox{otherwise }  \end{cases} $.
\end{df}

\begin{rem}Notice that if $\mathcal{S}$ generates a separated self-similar set  $\int^\ast$ can be replaced by $\int$ in the above definition.
\end{rem}

\section{Real analytic kernels and self similar sets in $\Rd$}\label{sec:realanal}

In this section $(G,d)\equiv (\Rd, d_E)$, where $d_E$ is the Euclidean
metric, we focus our attention on the following class of kernels. 
\begin{df} 
\label{homra} For $s>0$, we say that $k \in \mathcal{G}_s$  if it is
of the form, 
$$
k (x,y)=\frac{\o(x-y)}{|x-y|^s},\ x \in \Rd \setminus \{0\},$$
where $\o: \Rd \stm{\{0\}}\ra \R$  is a non-trivial real analytic and
homogeneous function of degree zero.
\end{df}

\

\noindent Mattila in \cite{M1} proved the following geometric rigidity theorem for self similar sets.
 
\begin{thm}[\cite{M1}]
\label{matss} If $\mathcal{S}$ is a set of similarities satisfying the
open set condition and $C$ is their corresponding $s$-dimensional
limit set then either $C$ lies on an $n$-dimensional affine subspace
for some $n\le d$ or $\mathcal{H}^s(C \cap M)=0$ for any
$t$-dimensional $C^1$ submanifold $M$ where $t$ can be any number in
$(0,d)$. 
\end{thm}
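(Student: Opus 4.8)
The plan is to argue by contradiction using blow-ups (tangent measures) at a density point of $C\cap M$, and to exploit self-similarity to show that every such blow-up reproduces an isometric copy of $C$. First I would dispose of a trivial reduction: if the affine hull $H$ of $C$ is a proper subspace of $\Rd$, then $C\subset H$ and the first alternative holds with $n=\dim H\le d$. So assume from now on that the affine hull of $C$ is all of $\Rd$, and suppose, for contradiction, that $\mathcal{H}^s(C\cap M)>0$ for some $t$-dimensional $C^1$ submanifold $M$ with $t\in(0,d)$. Writing $A:=C\cap M$, by Hutchinson's theorem $\mu$ is $s$-AD regular, hence a Radon measure, so by the Besicovitch density theorem $\mu$-almost every point of $A$ is a density point; fix one such $x\in A$ together with a coding $w\in E^{\N}$ with $\pi(w)=x$.

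Next I would show that every tangent measure of $\mu$ at $x$ produced along the natural scales $r_n:=r_{w|_n}$ is flat. Let $\psi_{x,r}(y)=(y-x)/r$ and $\mu_{x,r}:=r^{-s}(\psi_{x,r})_\sharp\mu$; by AD-regularity the family $\{\mu_{x,r}\}$ is uniformly AD-regular, hence precompact in the weak topology on compact sets, with nonzero weak limits. Since $M$ is $C^1$, the rescaled manifolds $\psi_{x,r}(M)$ converge as $r\to 0$ to the tangent plane $V:=T_xM$, a $t$-dimensional linear subspace with $t<d$. Because $x$ is a density point of $A$, AD-regularity gives $\mu(B(x,r)\setminus M)=o(r^s)$, so the mass of $\mu_{x,r}$ lying off $\psi_{x,r}(M)$ tends to $0$; consequently every weak limit $\nu$ of the sequence $\mu_{x,r_n}$ is supported in the proper subspace $V$.

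The heart of the argument is to contradict this flatness through self-similarity. Write $S_{w|_n}(z)=a_{w|_n}+r_{w|_n}O_{w|_n}z$ with $O_{w|_n}$ orthogonal. Since $x\in C_{w|_n}$ one computes directly that $\psi_{x,r_n}(y)=O_{w|_n}\big(S_{w|_n}^{-1}(y)-\pi(\sigma^n w)\big)$, and because $C_{w|_n}\subset C$ we have $S_{w|_n}^{-1}(C)\supset C$; hence
$$\psi_{x,r_n}(C)\ \supset\ O_{w|_n}\big(C-\pi(\sigma^n w)\big),$$
an isometric copy $K_n$ of $C$ contained in the ball of radius $\diam(C)$ about the origin. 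Passing to a subsequence along which $O_{w|_n}\to O$, along which $\pi(\sigma^n w)$ converges, and along which $\mu_{x,r_n}\rightharpoonup\nu$ simultaneously, the sets $K_n$ converge in the Hausdorff metric to an isometric copy $K_\infty$ of $C$, and $K_\infty\subset\supp\nu$ because supports of weakly convergent measures are stable under such limits. As $C$ affinely spans $\Rd$, so does $K_\infty$, whence $\supp\nu$ cannot lie in the proper subspace $V$. This contradicts the conclusion of the previous paragraph and forces $\mathcal{H}^s(C\cap M)=0$.

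I expect the main obstacle to be the third step: making rigorous that the blow-up of a self-similar set along its own contraction scales always contains a full isometric copy of $C$, which requires carefully tracking the orthogonal parts $O_{w|_n}$ (which need not converge without passing to a subsequence) and coordinating the three simultaneous limits — of the rotations $O_{w|_n}$, of the base points $\pi(\sigma^n w)$, and of the rescaled measures $\mu_{x,r_n}$ — so that the flatness conclusion and the spanning conclusion refer to one and the same tangent measure $\nu$. The remaining ingredients, namely AD-regularity from Hutchinson's theorem, the density theorem, and the $C^1$-flattening of $M$, are standard.
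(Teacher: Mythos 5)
The paper does not actually prove Theorem \ref{matss}: it is imported verbatim from Mattila's paper \cite{M1} and used as a black box in the proof of Theorem \ref{divspv}, so there is no internal proof to compare yours against; I assess your argument on its own terms. Your blow-up strategy is the natural one here, and most of your steps are correctly reduced to standard facts: the reduction to the case where $C$ affinely spans $\Rd$; the choice of a Besicovitch density point $x$ of $A=C\cap M$ (legitimate, since $\mu$ is Radon by \cite{Hut}); the estimate $\mu(B(x,r)\setminus M)\le \mu(B(x,r)\setminus A)=o(r^s)$, which together with the $C^1$-flattening of $M$ forces every weak limit $\nu$ of $\mu_{x,r_n}$ to satisfy $\supp\nu\subset V=T_xM$; and the exact identity $\psi_{x,r_n}(y)=O_{w|_n}\bigl(S_{w|_n}^{-1}(y)-\pi(\sigma^n w)\bigr)$, which gives $\psi_{x,r_n}(C)\supset O_{w|_n}\bigl(C-\pi(\sigma^n w)\bigr)=:K_n$ with $K_n\subset\supp\mu_{x,r_n}$. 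The simultaneous extraction of subsequences that you flag as the main obstacle is in fact harmless: $O(d)$ and $C$ are compact and the rescaled measures are vaguely precompact, so a single diagonal subsequence works.

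The genuine gap is elsewhere, in the sentence ``$K_\infty\subset\supp\nu$ because supports of weakly convergent measures are stable under such limits.'' That general principle is false: mass can disappear in the limit, e.g.\ $\mu_n=n^{-1}\delta_0+\delta_1$ converges weakly to $\delta_1$, yet $0\in\supp\mu_n$ for every $n$ and $0\notin\supp\delta_1$. Weak convergence alone only gives the \emph{opposite} inclusion (every point of $\supp\nu$ is a limit of points of $\supp\mu_{x,r_n}$); the inclusion you need, namely that limits of points $z_n\in\supp\mu_{x,r_n}$ lie in $\supp\nu$, requires an extra ingredient. Fortunately that ingredient is exactly the uniform lower AD bound you already invoked for precompactness: if $z_n\in\supp\mu_{x,r_n}$ and $z_n\to z$, then for fixed $\rho>0$, once $|z_n-z|<\rho$ and $\rho r_n<\diam(C)$ one has $\mu_{x,r_n}\bigl(\overline{B(z,2\rho)}\bigr)\ge\mu_{x,r_n}\bigl(B(z_n,\rho)\bigr)\ge c^{-1}\rho^s$, and upper semicontinuity of vague convergence on compact sets yields $\nu\bigl(\overline{B(z,2\rho)}\bigr)\ge c^{-1}\rho^s>0$, hence $z\in\supp\nu$. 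Applying this to points of $K_\infty$, which are Hausdorff limits of points of $K_n\subset\supp\mu_{x,r_n}$, gives $K_\infty\subset\supp\nu$; since $K_\infty$ is an isometric copy of $C$ it affinely spans $\Rd$, contradicting $\supp\nu\subset V$ with $\dim V<d$. With this one substitution --- the uniform lower regularity of the rescaled measures (a standard fact about AD-regular measures, cf.\ \cite{M}) in place of the false general ``stability of supports'' --- your proof is complete and is in the same blow-up spirit as Mattila's original density-point argument in \cite{M1}.
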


If there exists some $n \in \N$ such that $C \subset V_n$ where $V_n$
is an $n$-dimensional affine subspace and $0<\mathcal{H}^n(C)<\infty$
we will call the self-similar set $C$ \textit{flat}. In this case we
can assume that the ambient space is $\R^n$ and it follows that
whenever $C$ is flat it has interior points and forms a local tiling,
see \cite{schiefosc}.  

We will be interested in non-flat self similar sets whose generating
similarities are separated and do not contain rotations. The latter
means that $\mathcal{S}=\{S_i\}_{i \in E},$  is a set of  similarities
of the form
\begin{equation}
\label{gsim}S_i=\t_{q_i}\circ \delta_{r_i}
\end{equation}
where $q_i \in \Rd,r_i \in (0,1)$ and $i=1,\dots,N$. Here as usual
$\delta_r(x)=rx,\, x\in \Rd, r>0$ and $\t_q(x)=q+x, \, q,x \in \Rd$,
denote respectively the dilations and translations in $\Rd$.

\begin{thm}
\label{divspv}
Let $\mathcal{S}$ be a set of separated, rotation-free similarities
which generates 
an $s$-dimensional self similar set $C$ and a
kernel $k \in \mathcal{G}_s$. Then  
\begin{enumerate}
\item the symbolic principal values with respect to $k$ and
  $\mu=\mathcal{H}^s \lfloor C$ do not exist $\tilde{\mu}$-a.e. in
  $E^{\N}$, 
\item the symbolic maximal operator $\tsy$ is unbounded in $L^\infty (\tmu)$.
\end{enumerate}
\end{thm}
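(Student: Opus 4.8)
The plan is to conjugate everything to the one-sided Bernoulli shift $(E^\N,\sg,\tmu)$ and reduce both assertions to statements about Birkhoff sums of a single ``one-step'' truncated integral. For $i\in E$ and $x$ near $C_i$ set
\[
G_i(x)=\int_{C\stm C_i}k(x,y)\,d\mu(y),\qquad g(w)=G_{w_1}(\pi(w)).
\]
Since the $S_i$ are separated, rotation-free similarities, every $S_v=\t_{q_v}\circ\de_{r_v}$ satisfies $k(S_vx,S_vz)=r_v^{-s}k(x,z)$, while $\mu\lfloor C_v=r_v^{s}\,(S_v)_{\#}\mu$. Substituting $y=S_{w|_n}z$ in the difference of two consecutive truncations gives the scaling identity
\[
\int_{C_{w|_{n}}\stm C_{w|_{n+1}}}k(\pi(w),y)\,d\mu(y)=g(\sg^{\,n}w),
\]
so that $\int_{C\stm C_{w|_n}}k(\pi(w),y)\,d\mu(y)=\sum_{l=0}^{n-1}g(\sg^{\,l}w)$. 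Hence $\spv T(\pi(w))$ exists iff the Birkhoff partial sums of $g$ converge, and $\tsy(\mathbf 1)(w)=\sup_{m<n}\big|\sum_{l=m}^{n-1}g(\sg^{\,l}w)\big|$. As $\S$ is separated, $\pi$ is a homeomorphism, $g$ is bounded and continuous, and $\sg$ is ergodic for $\tmu$.

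The analytic heart is the claim that $g\ne0$ on a set of positive $\tmu$-measure, i.e.\ $G_i\not\equiv0$ on $C_i$ for at least one $i$. By separation, $\dist(C_i,C\stm C_i)>0$, so $x\mapsto k(x,y)$ and all its $x$-derivatives are uniformly controlled for $y\in C\stm C_i$ and $x$ in a neighbourhood $U_i$ of $C_i$; differentiating under the integral sign makes $G_i$ real analytic on $U_i$, indeed on $\Rd\stm(C\stm C_i)$. Suppose for contradiction that $G_i=0$ $\mu$-a.e.\ on $C_i$ for every $i$. Passing to the affine hull of $C$ (non-flatness then forces $s<d$), I would use that the zero set of a non-trivial real analytic function lies in a countable union of $C^1$ manifolds of dimension $\le d-1$, together with Mattila's rigidity Theorem \ref{matss}, to get $\ha^s(C_i\cap\{G_i=0\})=0$ unless $G_i\equiv0$ — impossible if $G_i$ vanishes on the full-measure set $C_i$. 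So each $G_i\equiv0$ on $U_i$. Since $C$ is totally disconnected, $\Rd\stm(C\stm C_i)$ is connected (for $d\ge2$; for $d=1$ one argues on the unbounded gap adjacent to the extreme piece), and the identity principle propagates $G_i\equiv0$ out to infinity. Summing, $\sum_iG_i=(N-1)\Phi$ with $\Phi(x)=\int_Ck(x,y)\,d\mu(y)$, forcing $\Phi\equiv0$ near infinity; but $\Phi(x)=|x|^{-s}\big(\o(x/|x|)+O(|x|^{-1})\big)$ as $|x|\to\infty$ and $\o\not\equiv0$, a contradiction.

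Assertion (i) then follows at once. Choosing $\e>0$ with $\tmu(\{|g|>\e\})>0$ and applying Birkhoff's Ergodic Theorem to $\mathbf 1_{\{|g|>\e\}}$, for $\tmu$-a.e.\ $w$ the orbit $\sg^{\,n}w$ meets $\{|g|>\e\}$ with positive frequency, so $g(\sg^{\,n}w)\not\to0$ and the partial sums $\sum_{l<n}g(\sg^{\,l}w)$ cannot converge. Hence $\spv T(\pi(w))$ fails to exist $\tmu$-a.e.

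For assertion (ii) it suffices to show $\tsy(\mathbf 1)\notin L^\infty(\tmu)$, i.e.\ $\sup_n\big|\sum_{l<n}g\circ\sg^{\,l}\big|=\infty$ on a set of positive (hence, by a $0$–$1$ law, full) measure. By the essential-range/Atkinson dichotomy for real cocycles over an ergodic system, this holds exactly when $g$ is \emph{not} a measurable coboundary, and by the Livšic regularity theorem for the Gibbs system $(E^\N,\sg,\tmu)$ this in turn holds iff some periodic flux
\[
J(u)=\int_{C\stm C_u}k\big(\pi(u^\infty),y\big)\,d\mu(y)\qquad(u\in E^\ast)
\]
is non-zero; the scaling identity moreover yields $J(u^k)=k\,J(u)$, exhibiting $J(u)$ as the density of the logarithmically divergent truncated integral at the self-similar point $\pi(u^\infty)$. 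Thus (ii) reduces to producing a single $u$ with $J(u)\ne0$, and this is the step I expect to be the main obstacle. Unlike the mean $\int g\,d\tmu$, which vanishes whenever $\o$ is odd, the individual fluxes are governed by no soft symmetry: simultaneous vanishing of all $J(u)$ is equivalent to solvability of the cohomological equation $G_i=h-h\circ S_i^{-1}$ on each $C_i$ by a single Hölder function $h$, which is \emph{not} excluded by the crude fact $G_i\not\equiv0$ alone. I would attempt to leverage $J(u^k)=kJ(u)$ and the real-analyticity of the $G_i$ near the fixed points $\pi(u^\infty)$ to propagate the assumed vanishing, via the identity principle and the infinity asymptotics of $\Phi$ used above, to $\Phi\equiv0$ near infinity and hence $\o\equiv0$; making this upgrade rigorous — from ``$G_i\not\equiv0$'' to ``some periodic flux is non-zero'' — is the crux of (ii).
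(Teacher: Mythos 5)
Your part (i) is correct and is, in substance, the paper's own argument: your scaling identity is exactly the paper's change-of-variables computation, the non-vanishing of the one-step integral $G_i$ is obtained, as in the paper, from Lojasiewicz's structure theorem combined with Mattila's rigidity theorem, and the conclusion is the same application of Birkhoff's theorem (infinitely many one-step contributions of size $>\e$ violate the Cauchy criterion). Your route to non-vanishing --- assume all $G_i$ vanish $\mu$-a.e., upgrade to $G_i\equiv 0$ by analyticity, propagate by the identity principle, and contradict the asymptotics $\Phi(x)=|x|^{-s}\bigl(\o(x/|x|)+o(1)\bigr)$ at infinity --- is a pleasant variant of the paper's direct construction of a point where $f>0$; it carries the same connectivity/affine-hull caveats that the paper also glosses over, and which you at least flag.

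Part (ii) has a genuine gap, and you have located it yourself. You reduce $L^\infty(\tmu)$-unboundedness of $\tsy$ to the strictly stronger claim that $\tsy(\mathbf 1)=\infty$ a.e., which (via the coboundary/Liv\v{s}ic dichotomy you invoke) is equivalent to some periodic flux $J(u)\neq 0$ --- and this you do not prove. The reduction is not merely incomplete but structurally risky: as you yourself observe, simultaneous vanishing of all periodic fluxes (i.e.\ your cocycle being a H\"older coboundary) is \emph{not} excluded by $G_i\not\equiv 0$; if it occurred, $\tsy(\mathbf 1)$ would in fact be essentially bounded and your route would terminate, while statement (ii) would remain true. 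The paper never touches the constant function; it exploits the freedom to choose the test function so as to rectify signs. Concretely: take $u\in E^*$ with $C_u\subset A_1$ (the relatively open subset of $C_1$ where $f>\eta$), let $x_1=\pi(u^\infty)$, and test $\tsy$ on $\psi(y)=\sum_{k\ge 0}\chi_{C\stm C_1}\bigl(S_{u^k}^{-1}(y)\bigr)$, which satisfies $\|\psi\|_{L^\infty(\mu)}\le 1$. Since $S_{u^k}(x_1)=x_1$, exact scaling invariance makes \emph{every} annular contribution $\int_{S_{u^k}(C)\stm S_{u^k}(C_1)}\psi(y)\,k(x_1,y)\,d\mu(y)$ equal to $f(x_1)>\eta$; all contributions are positive, so $m$ of them exceed $m\eta$. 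Continuity of $k$ then upgrades this from the single point $x_1$ to a whole cylinder, Birkhoff's theorem brings $\tmu$-a.e.\ orbit into that cylinder, and a pigeonhole over the entry time produces, for each $M$, a fixed function $\psi\circ\pi\circ\sigma^{n_0}$ of sup-norm at most $1$ whose image under $\tsy$ exceeds $M$ on a set of positive $\tmu$-measure. The only analytic input is $f(x_1)>0$ at one point of $A_1$, which your part (i) already supplies; no cohomological non-degeneracy is needed. The missing idea, then, is that disproving $L^\infty$-boundedness permits non-constant test functions adapted to the positivity set of the kernel; insisting on $f=\mathbf 1$ converts the problem into an unproven (and possibly false) rigidity statement about periodic fluxes.
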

\begin{proof} Let 
$$C_1:= S_1(C)=\pi([1]).$$
The function $f:(C \stm C_1)^c \ra \R$ defined by
$$f(x)=\int_{C \stm C_1} k(x,y)d \mu (y)$$
is real analytic in $(C \stm C_1)^c$. Furthermore $f$ is not
identically equal to zero. To see this fix $y_0 \in C\stm C_1$. Then
there exists $x_0 \in \partial B(y_0, 2\diam(C))$ such that
$\O(x_0-y_0):=\eta_0\neq 0$ and without loss of generality we can
assume that $\eta_0>0$. Hence, there exists some cylinder $[\alpha]$
such that  
$$
\O(x_0-y)>0 \  \text{ for all } \  y \in C_\alpha. 
$$
Notice also that for all $w \in E^\ast$, since $\S$ does not contain
rotations, we have
\begin{equation}
\label{homo}
\O (S_w(x)-S_w (y))=\O (x-y) \  \text{ for all }  \ x,y \in \Rd.
\end{equation}
Therefore,
\begin{equation*}
\begin{aligned}
0<\int_{C_\alpha \stm S_\alpha(C_1)}\frac{\O(x_0-y)}{|x_0-y|^s}d \mu(y)
&=\int_{C \stm C_1}\frac{\O(S_\a(S_\a^{-1}(x_0))-S_\a(z))}{|
  S_\a(S_\a^{-1}(x_0))-S_\a(z)|^s}|S_\a'|d \mu (z)\\ 
&=\int_{C \stm C_1}\frac{\O(S_\a^{-1}(x_0)-z)}{| S_\a^{-1}(x_0)-z|^s}d
\mu (z)\\
&=f(S_\a^{-1}(x_0)), 
\end{aligned}
\end{equation*}
after changing variables $y=S_\a (z)$. Hence
$f(S_\a^{-1}(x_0))>0$. Since $x_0 \notin C$ it follows that
$S_\a^{-1}(x_0)\notin C$ and $f:(C\setminus C_1)^c \ra \R$ is not
identically equal to zero. Let 
$$
Z_f=\{x\in(C \stm C_1)^c:f(x)=0\}.
$$
It follows by Lojasiewicz's Structure Theorem, see e.g. \cite{kr} ,
that $Z_f$ is a countable union of real analytic submanifolds whose
dimension does not exceed $d-1$. 
Since $\mathcal{S}$ is separated the limit $C$ is non-flat. This follows for example from \cite[Corollary 2.3]{schiefosc}. We therefore deduce from
Theorem~\ref{matss}  that  
$$
\mu(C_1 \cap Z_f)=0.
$$
Without loss of generality we can thus assume that there exists some
$x_1 \in C_1$ such that $f(x_1)>0$. Hence by the continuity of $f$
there exists some relatively open neighborhood 
$A_1 \subset C_1$ of $x_1$ (so $\mu(A_1)>0$) and  
\begin{equation}
\label{a1prop}
f(x)>\eta \  \text{ for all } \ x \in A_1 \ \text{ and some } \ \eta>0.
\end{equation}
The shift $\sigma:E^\N\to E^\N$ is a measure preserving and ergodic
transformation with 
respect to the measure $\tmu$. Since $\tmu(\pi^{-1}(A_1))=\mu(A_1)>0$,
Birkhoff's Ergodic Theorem yields 
$$
\lim_{n \ra \infty} \frac{1}{n} \sum_{k=0}^{n-1}
\chi_{\pi^{-1}(A_1)} (\sg^k (w)) =\tmu (\pi^{-1}(A_1)) >0
$$ 
for $\tmu$-a.e. $w \in E^\N$. Therefore if
$$
W=\{w \in E^{\N}:\text{ there exist infinitely many $k$'s such
  that $\sigma^k(w)\in \pi^{-1}(A_1)$}\},
$$ 
we see that $\tmu (W)=1$. Let $w \in W$. Let $x= \pi(w)$, and define 
$$G_w=\{k \in \N: \sg^k(w) \in \pi^{-1}(A_1) \}.$$
Now if $k \in G_w$ then $\sg^k(w) \in [1]$, that is $w_{k+1}=1$, and
after a change of variables we get 
\begin{equation}
\label{cov}
\begin{aligned}
\int_{C_{w|_{k}} \stm C_{w|_{k+1}}} k(x,y) d \mu (y)
&=\int_{C_{w|_{k}} \stm C_{w|_{k}1}} k(x,y) d \mu (y) \\
&= \int_{C \stm C_1} k (x, S_{w |_{k}}(y)) (r_{w_1} \dots r_{w_k})^s d \mu (y).
\end{aligned}
\end{equation}
Let $x' =\pi (\s^k(w))$. Then
$$
S_{w|_k}(x')
=S_{w|_k}(\pi (\s^k(w)))
=\pi(w|_k\s^k(w))
=\pi(w)
=x.
$$
Furthermore by the choice of $w$ and $k$ it follows that $x' \in A_1$. Hence by
(\ref{a1prop}) and (\ref{cov})
\begin{equation*}
\begin{split}
\int_{C_{w|_{k}} \stm C_{w|_{k+1}}} k(x,y) d \mu (y)&= \int_{C \stm
  C_1} k (S_{w |_{k}}(x'), S_{w |_{k}}(y)) (r_{w_1} \dots r_{w_k})^s d
\mu (y) \\ 
&= \int_{C \stm C_1} \frac{ \Omega (S_{w |_{k}}(x')-S_{w
    |_{k}}(y))}{d(S_{w |_{k}}(x'), S_{w |_{k}}(y))^s}(r_{w_1} \dots
r_{w_k})^s d \mu (y) \\ 
&= \int_{C\stm C_1} \frac{\Omega (x'-y)}{d(x',y)^s}d \mu (y) \\
&= f(x')> \eta .
\end{split}
\end{equation*}
Hence we have shown that for $\tmu$ a.e. $w \in E^\N$ there exists a sequence 
$G_w=\{k_i\}_{i  \in \N}$ such that 
\begin{equation}
\label{cauchy}
 \int_{C_{w|_{k_i}} \stm C_{w|_{k_i+1}}} k(\pi (w),y) d \mu (y)> \eta.
\end{equation}
Therefore for $\tmu$ a.e. $w \in E^\N$ the symbolic principal values
fail to exist. Hence we have proven (i). 

\vskip2mm \noindent For the proof of (ii) let $u \in E^\ast$ such that
$C_u \subset A_1$ where $A_1$ is as in (\ref{a1prop}). We now define a
sequence of maps $\{\f_k\}_{k \in \N}, \, \f_k:\Rd \ra \Rd$, by 
\begin{itemize}
\item $\f_0=Id$
\item $\f_k=S_{u^k}$ for all $k \geq 1$.
\end{itemize}
We denote
$$
x_1= \pi(u^\infty)\in C_u.
$$
The function $g:C \ra \R$  defined by 
$$g(y)=\sum_{k=0}^\infty \chi_{C \stm C_1}(\f_k^{-1}(y))$$
is either $1$ or $0$ therefore belongs to $L^\infty (\mu)$. Then for all $ m \in \N$,
\begin{equation}\label{g1}
\int_{C \stm \f_m (C_1)}  g(y)k_s(x_1,y)\, d \mu (y) =
\sum_{k=0}^{m}\int_{\f_k(C) \stm \f_k(C_1)}g(y)k(x_1,y) \, d \mu (y)
\end{equation}
Using the change of variables $y=\f_k (z)$ we have for all $k \in \N$
\begin{equation}
\label{g2}
\begin{split}
\int_{\f_k(C) \stm \f_k(C_1)}g(y)k(x_1,y)d \mu (y)&=\int_{\f_k(C)
  \stm \f_k(C_1)}\chi_{C \stm C_1}(\f_k^{-1}(y))k(x_1,y)d \mu (y)\\ 
&=\int_{C \stm C_1}\chi_{C \stm C_1}(z)k(x_1,\f_k (z))|\f_k'|^s d \mu (z)\\
&=\int_{C \stm C_1}\frac{\O(\f_k(\f_k^{-1}(x_1))-\f_k
  (z))}{|\f_k(\f_k^{-1}(x_1))-\f_k (z)|^s}|\f_k'|^s d \mu (z)\\
&=\int_{C \stm C_1} k(\f_k^{-1}(x_1),z)d \mu (z) \\
&=\int_{C \stm C_1} k(x_1,z)d \mu (z) \\
&=f(x_1)>\eta
\end{split}
\end{equation}
by (\ref{a1prop}) because $x_1\in C_u \subset A_1$.
Now let $M>0$ be an arbitrary  number and let $m \in \N$ such that $m
\eta > M$. Then by (\ref{g1}) and (\ref{g2}),
\begin{equation*}
\int_{C \stm \f_{m} (C_1)}  g(y)k(x_1,y)d \mu (y)>M.
\end{equation*}
By continuity of $k$ there thus exists some $m'>m$ such that
\begin{equation*}
\int_{C \stm \f_{m} (C_1)}  g(y)k(x,y)d \mu (y)>M
\end{equation*}
for all $x \in \f_{m'}(C)$. Therefore we have shown that there exists
a word $v \in E^\N$, which is just $v=u^\infty$, such that for all $M>0$
there exist $m_1, m_2 \in \N$, which depend on $M$, such that 
\begin{equation}
\label{bigm}
\int_{E^\N \stm [v|_{m_1}]}  g(\pi (\theta))k(\pi(w),\pi(\theta))d \tmu (\theta)>M
\end{equation}
for all $w \in [v|_{m_1+m_2}]$. Now let
$$
V=\{w \in E^{\N}:\text{ there exist  $n\ge 0$ such that $\sigma^n(w)\in
  [v|_{m_1+m_2}]$}\}.
$$ 
Applying Birkhoff's Ergodic Theorem as in (i) we obtain that
$\tmu(V)=\tmu(E^\N)$. Let $w \in V$ and let $n:=n(w)\ge 0$ be such that $\s^n(w)
\in [v|_{m_1+m_2}]$. Then using the change of variables $\tau= w|_n
\theta $ we get 
\begin{equation*}
\begin{split}
\int_{[w|_{n}]\stm [w|_{n+m_1}]} & g(\pi(\s^n(\tau)))k(\pi(w), \pi (\tau))d \tmu (\tau)=\\
&=\int_{ E^\N \stm [w|_{n,\dots, n+m_1}]}g(\pi(\s^n(w|_n \theta)))k(\pi(w), \pi (w|_n \theta))|S_{w|_n}'|^sd \tmu (\theta) \\
&=\int_{ E^\N \stm [v|_{m_1}]}g(\pi( \theta))k(S_{w|_n}(\pi(\s^n (w)), S_{w|_n}(\pi ( \theta)))|S_{w|_n}'|^sd \tmu (\theta)\\
&=\int_{ E^\N \stm [v|_{m_1}]}g(\pi( \theta))k(\pi(\s^n (w), \pi ( \theta))d \tmu (\theta)>M 
\end{split}
\end{equation*}
by (\ref{bigm}) because $\s^n(w) \in [v|_{m_1+m_2}]$. Hence we have
shown that for $\tmu$-a.e. $w \in E^\N$ there exists some $n(w)\ge 0$
such that $\tsy (g \circ \pi \circ \s^{n(w)})(w) > M$. Therefore there
exists some $n_0$ and some $B_{n_0} \subset E^\N$ with
$\tmu(B_{n_0})>0$ such that  
$$
\tsy (g \circ \pi \circ \s^{n_0})(w)>M
$$
for all $w \in B_{n_0}$. Thus $\|\tsy (g \circ \pi  \circ
\s^{n_0})\|_{L^\infty (\tmu)}>M$ while on the other hand $\| g \circ
\pi \circ  \s^{n_0}\|_{L^\infty (\tmu)} \leq 1$. Since $M$ was
arbitrary we have shown that $\tsy$ is not bounded in $L^\infty
(\tmu)$. 
\end{proof}
\begin{rem} If $\S$ satisfies the OSC then we can still define the symbolic principal values for $\tmu$-a.e. $w \in E^\N$ and the Theorem \ref{divspv} holds for all such sets of similarities $\S$ that generate non-flat limit sets.
\end{rem}
\

\noindent The following theorem follows immediately from (ii) of
Theorem \ref{divspv} and \cite[Lemma 2.4]{CM2}. 

\begin{thm}
\label{maxunb} 
Let $\mathcal{S}$ be a separated and rotation free set of similarities
which generates the 
$s$-dimensional self similar set $C$. If $k \in \mathcal{G}_s$, then the
maximal singular integral with respect to $k$ and $\mu=\mathcal{H}^s
\lfloor C$ is unbounded in $L^\infty(\mu)$. 
\end{thm}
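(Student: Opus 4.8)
The plan is to deduce the statement from part (ii) of Theorem~\ref{divspv} by means of a pointwise comparison between the symbolic maximal operator $\tsy$ and the metric maximal singular integral $T^\ast$; this comparison is precisely the content of \cite[Lemma 2.4]{CM2}. Since $\mathcal{S}$ is separated, the coding map $\pi:E^\N\to C$ is a bijection and pushes $\tmu$ forward to $\mu$, so a function $f$ on $C$ and the function $f\circ\pi$ on $E^\N$ have the same $L^\infty$ norm, $\|f\circ\pi\|_{L^\infty(\tmu)}=\|f\|_{L^\infty(\mu)}$, and $T^\ast(f)\circ\pi$ has the same $L^\infty(\tmu)$ norm as $T^\ast(f)$ in $L^\infty(\mu)$. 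Moreover, by the Remark following Definition~\ref{symmax}, in the separated case the starred integral defining $\tsy$ is an ordinary integral. The argument thus reduces to bounding the symbolic truncations by the metric ones with a uniformly controlled error.

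For the comparison fix $w\in E^\N$, write $x=\pi(w)$ and $R_m=r_{w_1}\cdots r_{w_m}$, and set $\delta_0=\min_{i\ne j}\dist(S_i(C),S_j(C))>0$, which is positive exactly because $\mathcal{S}$ is separated. A standard self-similarity estimate traps each cylinder between concentric balls,
\begin{equation*}
B(x,\delta_0 R_m)\cap C\subset C_{w|_m}\subset B(x,\diam(C)\,R_m)\cap C,
\end{equation*}
the lower inclusion following from the fact that the nearest sibling cylinder lies at distance at least $\delta_0 R_{m-1}\ge\delta_0 R_m$ from $x$. Writing the symbolic annulus as a difference,
\begin{equation*}
\int_{C_{w|_m}\setminus C_{w|_n}}k(x,y)f(y)\,d\mu(y)=\int_{C\setminus C_{w|_n}}k(x,y)f(y)\,d\mu(y)-\int_{C\setminus C_{w|_m}}k(x,y)f(y)\,d\mu(y),
\end{equation*}
it suffices to control each term. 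By the two inclusions, $C\setminus C_{w|_m}$ differs from $C\setminus B(x,\delta_0 R_m)$ only by the set $C_{w|_m}\setminus B(x,\delta_0 R_m)$, which lies in the annulus $\{\delta_0 R_m\le|x-y|\le\diam(C)R_m\}$; there $|k(x,y)|\le\|\Omega\|_\infty(\delta_0 R_m)^{-s}$ while $\mu(C_{w|_m})=R_m^s$, so the defect contributes at most $\|\Omega\|_\infty\delta_0^{-s}\|f\|_\infty$, independently of $m$. Hence $\bigl|\int_{C\setminus C_{w|_m}}kf\,d\mu\bigr|\le T^\ast(f)(x)+\|\Omega\|_\infty\delta_0^{-s}\|f\|_\infty$, and likewise with $n$ in place of $m$, so after taking the supremum over $m<n$,
\begin{equation*}
\tsy(f)(w)\le 2\,T^\ast(f)(\pi(w))+2\|\Omega\|_\infty\,\delta_0^{-s}\,\|f\|_{L^\infty(\mu)}.
\end{equation*}

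Finally I would argue by contradiction. If $T^\ast$ were bounded on $L^\infty(\mu)$ with norm $K$, then the displayed inequality together with $\|T^\ast(f)\circ\pi\|_{L^\infty(\tmu)}=\|T^\ast(f)\|_{L^\infty(\mu)}$ would force $\|\tsy(f)\|_{L^\infty(\tmu)}\le\bigl(2K+2\|\Omega\|_\infty\delta_0^{-s}\bigr)\|f\|_{L^\infty(\mu)}$ for every $f$, that is, $\tsy$ would be bounded in $L^\infty(\tmu)$, contradicting Theorem~\ref{divspv}(ii). Thus $T^\ast$ is unbounded in $L^\infty(\mu)$. The main obstacle is the second paragraph, namely the cylinder-to-ball comparison with an error bounded uniformly in $m$ and $n$: this is where separation (through the positive constant $\delta_0$) and the $s$-homogeneity of the kernel (through the matching factors $R_m^{-s}$ and $R_m^s$) are used decisively, and it is exactly the estimate recorded in \cite[Lemma 2.4]{CM2}.
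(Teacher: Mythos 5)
Your proof is correct and follows essentially the same route as the paper: the paper deduces Theorem \ref{maxunb} directly from Theorem \ref{divspv}(ii) together with the symbolic-to-metric comparison of \cite[Lemma 2.4]{CM2}, which is precisely the inequality $\tsy(f)(w)\le 2\,T^\ast(f)(\pi(w))+c\,\|f\|_{L^\infty(\mu)}$ that you establish. The only difference is that you supply a self-contained (and correct) proof of that comparison---using the separation constant $\delta_0$, the trapping of cylinders between concentric balls, and the $s$-homogeneity of the kernel---rather than citing it.
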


\noindent We say that a set of similarities $\mathcal{S}$ is
\textit{strongly separated} if the corresponding self similar set $C$
satisfies 
$$
\min_{i \in E} \dist (S_i(C),C \stm S_i(C)) \geq \max_{i\in E}\diam(C_i).
$$
As another immediate corollary of Theorem \ref{divspv}, we have the
following theorem. 

\begin{thm}
\label{divpv} 
Let $\mathcal{S}$ be a strongly separated and rotation free set of
similarities which generates an 
$s$-dimensional self similar
set $C$. If $k \in \mathcal{G}_s$, then the principal values with respect
to $k$ and $\mu=\mathcal{H}^s \lfloor C$ do not exist $\mu$-a.e.. 
\end{thm}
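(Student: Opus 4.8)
The plan is to deduce the statement directly from part~(i) of Theorem~\ref{divspv}. A strongly separated system is in particular separated and rotation free, so Theorem~\ref{divspv}(i) applies and produces a set $W\subset E^\N$ with $\tmu(W)=1$ on which the \emph{symbolic} principal values fail to exist. The whole point is that strong separation lets one realize each symbolic truncation $\int_{C\stm C_{w|_k}}k(\pi(w),\cdot)\,d\mu$ as an \emph{ordinary} truncation $T_\e(\pi(w))$ for a suitable radius $\e=\e_k\to0$; granting this, the ordinary principal value at $\pi(w)$ inherits the non-convergence of the symbolic one, and it only remains to transport the conclusion from $\tmu$ to $\mu$.

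The geometric heart is the radius-matching claim: for $w\in W$, $x:=\pi(w)$ and every $k$ there is $\e_k>0$ with $C\cap B(x,\e_k)=C_{w|_k}$ and $\e_k\to0$. To find $\e_k$ I note that $x\in C_{w|_k}$ gives $C_{w|_k}\subset B(x,\diam(C_{w|_k}))$ with $\diam(C_{w|_k})=r_{w_1}\cdots r_{w_k}\diam(C)\to0$, while writing $C\stm C_{w|_k}$ as the finite union of sibling blocks $S_{w|_j}(C_i)$, $i\neq w_{j+1}$, and applying each similarity $S_{w|_j}$ to the level-zero inequality $\dist(S_i(C),C\stm S_i(C))\ge\max_i\diam(C_i)$ yields
\[
\dist\big(x,\,C\stm C_{w|_k}\big)\ \ge\ r_{w_1}\cdots r_{w_{k-1}}\max_i\diam(C_i)\ \ge\ \diam(C_{w|_k}).
\]
Any $\e_k$ in the resulting gap has $B(x,\e_k)$ containing $C_{w|_k}$ and missing $C\stm C_{w|_k}$, which is the claim. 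The step I expect to be the main obstacle is upgrading the last chain to a \emph{strict} inequality, so that the admissible interval for $\e_k$ is nonempty; the crude bound only gives $\ge$, with equality threatening precisely when $w_k$ indexes a block of maximal diameter. I would remove this by exploiting that $x$ sits in the strictly smaller cylinder $C_{w|_{k+1}}$, so that its distance to the competing blocks strictly exceeds the distance from the full block $C_{w|_k}$, together with the genuine margin present in the separation distance (strict with room to spare in standard examples such as the four-corners set).

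Granting the matching, $T_{\e_k}(x)=\int_{C\stm C_{w|_k}}k(\pi(w),y)\,d\mu(y)$ is exactly the $k$-th symbolic truncation at $\pi(w)$, which by Theorem~\ref{divspv}(i) has no limit; hence $\lim_{\e\to0}T_\e(x)$ does not exist and the ordinary principal value fails at every $x\in\pi(W)$. If one wants a uniform obstruction, matching at the consecutive levels $k_i,k_i+1$ furnished by the annulus bound \eqref{cauchy} gives $|T_{\e^-_i}(x)-T_{\e^+_i}(x)|>\eta$ with $\e^\pm_i\to0$, so that the Cauchy criterion fails robustly and only consecutive-pair matching is actually needed. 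Finally, since $\mathcal{S}$ is separated the coding map $\pi$ is a bijection of $E^\N$ onto $C$ and $\mu=\tmu\circ\pi^{-1}$, whence $\mu\big(C\stm\pi(W)\big)=\tmu(E^\N\stm W)=0$. As the principal value fails throughout $\pi(W)$, it fails $\mu$-a.e., which is the assertion.
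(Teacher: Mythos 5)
Your strategy is the paper's own: reduce to \eqref{cauchy} from Theorem \ref{divspv}(i) by identifying metric truncations at $x=\pi(w)$ with symbolic truncations, via the strong-separation estimate (your distance bound is exactly the paper's computation \eqref{ssdisj}), and then push the null set through the bijection $\pi$. The problem is the step you yourself flag as the main obstacle, and neither of your proposed repairs closes it. Your radius-matching claim needs the \emph{strict} inequality $\sup_{z\in C_{w|_k}}|x-z| < \dist(x, C\setminus C_{w|_k})$, but strong separation is stated with $\geq$, so there is no ``margin'' to invoke: the middle-thirds Cantor set ($S_1(x)=x/3$, $S_2(x)=x/3+2/3$) is strongly separated with exact equality, $\dist(C_1,C_2)=1/3=\diam(C_i)$. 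Nor does membership of $x$ in the smaller cylinder $C_{w|_{k+1}}$ force strictness: for $x=1/3=\pi(12^\infty)\in C_{12}$ one has $\dist(x,C_2)=|1/3-2/3|=1/3=|1/3-0|=\sup_{z\in C_1}|x-z|$, so every closed ball containing $C_1$ already contains the point $2/3\in C_2$, and \emph{no} admissible radius $\e_1$ exists. Thus the claim ``$C\cap B(x,\e_k)=C_{w|_k}$ for some $\e_k$'' can genuinely fail at points of $C$; whether it holds for every $w$ in the Birkhoff-generic set $W$ (equivalently, for $\tilde\mu$-a.e.\ $w$) is precisely what would need proving, and your argument gives no mechanism for it.

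The paper circumvents this by never asking for set-theoretic equality. It truncates at the exact radius $\e_k=\diam(C_{w|_k})$ and notes that $U(\pi(w),\e_k)$ misses $C\setminus C_{w|_k}$ while $B(\pi(w),\e_k)\supset C_{w|_k}$, so the discrepancy between $C\cap B(\pi(w),\e_k)$ and $C_{w|_k}$ lies on the sphere $\partial B(\pi(w),\e_k)$. That discrepancy is $\mu$-null: spheres are $C^1$ hypersurfaces (pairs of points when $d=1$), a separated system generates a non-flat limit set, and Theorem \ref{matss} then gives $\mu(\partial B)=0$; this is the content of the paper's assertion $\mu(B(\pi(w),\diam(C_{w|_k})))=\mu(C_{w|_k})$. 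Since $k(\pi(w),\cdot)$ is bounded on those spheres, the metric annulus integrals between radii $\diam(C_{w|_{k_i}})$ and $\diam(C_{w|_{k_i+1}})$ coincide with the symbolic ones, hence exceed $\eta$ by \eqref{cauchy}, and the Cauchy criterion for $\lim_{\e\to 0}T_\e(1)(\pi(w))$ fails. If you replace your exact-matching claim with this measure-theoretic matching (or supply a genuine proof that strictness holds for $\tilde\mu$-a.e.\ $w$), the remainder of your write-up, including the final transport of the null set by $\pi$, is correct.
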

\begin{proof} Since $\mathcal{S}$ is strongly separated, for all
  $v=(i_1,\dots,i_{|v|}) \in E^\ast$,  
\begin{equation}
\begin{split}
\label{ssdisj}
\dist (C_v,C_{v|_{|v|-1}} \stm C_v)
&=\dist(S_{v|_{|v|-1}}(S_{i_{|v|}}(C)),S_{v|_{|v|-1}}(C\stm S_{i_{|v|}}(C)))\\ 
&=r_{v_1} \dots r_{v_{|v|-1}} \dist(C, C \stm S_{i_{|v|}}(C))\\
& \geq  r_{v_1} \dots r_{v_{|v|-1}} \diam(S_{i_{|v|}}(C))\\
&=  r_{v_1} \dots r_{v_{|v|}} \diam(C)\\
&=\diam(C_v).
\end{split}
\end{equation}
Furthermore $C\stm C_v= \cup_{j=1}^{|v|} C_{v|_{j-1}} \stm C_{v|_j}$
and this union is disjoint. Therefore using (\ref{ssdisj}), we get
\begin{equation*}
\label{ssdisj2}
\begin{split}
\dist(C_v,C \stm C_v)&= \min_{j=1,\dots, |v|} \dist(C_v,C_{v|_{j-1}} \stm C_{v|_j}) \\
&\geq \min_{j=1,\dots, |v|} \dist(C_{v|_j},C_{v|_{j-1}} \stm C_{v|_j})\\
&\geq  \min_{j=1,\dots, |v|} \diam(C_{v|_j}) \\
&= \diam (C_v).
\end{split}
\end{equation*}
In particular this implies that for all $w \in E^\N$ and every $k \in \N$
\begin{equation*}
U(\pi(w), \diam(C_{w|_k})) \cap (C \stm C_{w|_k})=\emptyset 
\end{equation*}
and
\begin{equation*}
\mu(B(\pi(w), \diam(C_{w|_k})))=\mu( C_{w|_k}).
\end{equation*}
Hence, as in the proof of Theorem \ref{divspv}, by (\ref{cauchy}), for
$\tmu$-a.e. $w \in E^\N$ there exists a sequence $\{k_i\}_{i \in \N}$
such that 
\begin{equation*}
\begin{split}
\int_{B(\pi(w), \diam(C_{w|_{k_i}})) \stm B( \pi(w),
  \diam(C_{w|_{k_i+1}}))}& k(\pi (w),y) d \mu (y)\\
  &=\int_{C_{w|_{k_i}}
  \stm C_{w|_{k_i+1}}} k(\pi (w),y) d \mu (y)= \eta>0
\end{split}
\end{equation*}
and the principal values fail to exist for all such $\pi(w)$. 
\end{proof}

\section{The OSC and singular integrals in metric groups}\label{sec:unb}

In the context of complete metric spaces Schief proved
in \cite{s} that if $\S$ is a set of similarities as in (\ref{simi})
generating the limit set $C$  and  $\sum_{i=1}^N r_i^s=1$ then 
\begin{equation}
\label{mss}
\h^s (C) >0 \quad \Longrightarrow \quad \text{SOSC}.
\end{equation} If furthermore the space is doubling, that is there
exists some  $N \in \N$ such that for all $x$ and all $r>0$ there
exist $\{x_i\}_{i=1}^N$ such that 
$$B(x,r) \subset \cup_{i=1}^N B(x_i,r/2),$$
Balogh and Rohner proved in \cite{br} that
\begin{equation}
\label{br}
\text{OSC}\quad\Longleftrightarrow \quad 0<\h^s(C)< \infty.
\end{equation}

We remark that if $(G,d)$ is a locally compact metric group the
left Haar  measure $\lambda$ on $G$ is doubling that is, there exist
some constant $C$ such that for all $x \in G$ and $r>0$, 
$$\lambda (B(x,2r))\leq C \lambda (B(x,r)),$$
see e.g. \cite[Proposition 2.14]{ms}. By an observation of Coifman and
Weiss in \cite{cw} the existence of a doubling measure on $G$ forces
the metric space to be doubling. Therefore whenever $\S$ is a set of
similarities in a locally compact metric group, (\ref{mss}) and
(\ref{br}) imply that 
$$
\text{OSC}\quad \Longleftrightarrow \quad \text{SOSC}.
$$

From now on $(G,d)$ will be a doubling, complete metric group with dilations as in Section \ref{sec:notset} and $\mathcal{S}=\{S_1,\dots,S_N\},N \geq 2,$ will  be an
iterated function system of similarities of the 
form
\begin{equation}
\label{gsim2}S_i=\t_{q_i}\circ \delta_{r_i}
\end{equation}
where $q_i \in G,r_i \in (0,1)$ and $i=1,\dots,N$. 

\begin{thm}
\label{unb} 
Let $\S$ be an IFS as in (\ref{gsim2}) which satisfies the OSC and
generates an $s$-dimensional self similar set $C$. Let $k$ an
$s$-homogeneous kernel. If there exists a periodic point $x_w, \ w \in
E^\ast$, such that $x_w \in O$ for some open set $O$ for which $\S$
satisfies the SOSC, and 
\begin{equation*}
\int_{C\stm C_w} k(x_w,y)d \mu (y) \neq 0
\end{equation*}
then $T^\ast (1) (x)=\infty$ for $\mu$-a.e. $x$.
\end{thm}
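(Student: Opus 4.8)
The plan is to combine the self-similar scaling of the kernel, the non-vanishing integral at the periodic point, Birkhoff's Ergodic Theorem, and the metric separation provided by the SOSC, in order to produce, for $\tmu$-a.e.\ coding $u$, a sequence of radii along which the ball-truncated integrals at $\pi(u)$ blow up. First I would record the scaling identities. Since each $S_v=\t_{q_v}\circ\de_{r_v}$ is rotation free, $S_v(a)^{-1}\cdot S_v(b)=\de_{r_v}(a^{-1}\cdot b)$, so by $0$-homogeneity of $\o$ and $d(S_va,S_vb)=r_vd(a,b)$ one gets $k(S_v(a),S_v(b))=r_v^{-s}k(a,b)$, while $\mu(S_v(E))=r_v^s\mu(E)$. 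Writing $x_w$ for the periodic point, $S_w(x_w)=x_w$, the change of variables $y=S_{w^k}(z)$ yields, for every $k\ge 0$,
$$\int_{C_{w^k}\stm C_{w^{k+1}}}k(x_w,y)\,d\mu(y)=\int_{C\stm C_w}k(x_w,z)\,d\mu(z)=\eta\neq 0,$$
so that every ``block'' seen from $x_w$ contributes the same nonzero value $\eta$ of a fixed sign.

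Next I would extract quantitative separation from the SOSC. Because $x_w\in S_w(O)$, which is open and disjoint from $\overline{S_v(O)}\supset C_v$ for each $v\neq w$ of length $|w|$, the point $x_w$ lies at distance $\rho_0:=\dist(x_w,C\stm C_w)>0$ from the rest of $C$; hence $f(x):=\int_{C\stm C_w}k(x,y)\,d\mu(y)$ is continuous near $x_w$ with $f(x_w)=\eta$. I fix a ball $A=U(x_w,\de)$ with $\overline A\subset O$ and $\dist(\overline A,C\stm C_w)\ge\rho_0/2$, on which $f$ has constant sign and $|f|>\eta':=|\eta|/2$, and I choose $j_0$ with $C_{w^{j_0}}\subset A$. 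The scaling identity then shows that whenever $\sg^{\,n}(u)\in[w^m]$ with $m\ge j_0$, the consecutive blocks $D_j:=C_{u|_{n+j|w|}}\stm C_{u|_{n+(j+1)|w|}}$ satisfy $\int_{D_j}k(\pi(u),y)\,d\mu(y)=f(\pi(\sg^{\,n+j|w|}(u)))$, which for $0\le j\le m-j_0$ has absolute value $>\eta'$ and the common sign of $f$ on $A$, since $\pi(\sg^{\,n+j|w|}(u))\in C_{w^{m-j}}\subset A$. Moreover $\pi(\sg^{\,n}(u))\in A\subset O$ forces $\pi(u)\in S_{u|_n}(O)$, whence $C\stm C_{u|_n}\subset G\stm S_{u|_n}(O)$ and
$$\dist\big(\pi(u),\,C\stm C_{u|_n}\big)\ \ge\ r_{u|_n}\,\dist(\overline A,\,G\stm O)=:r_{u|_n}\beta_1,$$
a level-$n$ separation comparable to $\diam(C_{u|_n})$.

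Birkhoff's Ergodic Theorem, applied exactly as in the proof of Theorem~\ref{divspv} to the measure-preserving ergodic shift $(\sg,\tmu)$ and to the positive-measure cylinders $[w^m]$, shows that for $\tmu$-a.e.\ $u$ and every $m\ge j_0$ there is some $n=n(u,m)$ with $\sg^{\,n}(u)\in[w^m]$. Fix such a $u$, a target $M$, a large $m$, and set $x=\pi(u)$. The heart of the argument, and the step I expect to be the main obstacle, is to convert the long same-sign run $\sum_j\int_{D_j}k(x,\cdot)\,d\mu$ into a lower bound for $T^\ast(1)(x)$, since under the OSC alone metric balls need not coincide with cylinders. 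I would choose radii $\e_1\approx\diam\!\big(C_{u|_{n+(m-j_0)|w|}}\big)$ and $\e_2:=r_{u|_n}\beta_1$ and use
$$T^\ast(1)(x)\ \ge\ \tfrac12\Big|\int_{C\stm B(x,\e_1)}k(x,y)\,d\mu-\int_{C\stm B(x,\e_2)}k(x,y)\,d\mu\Big|=\tfrac12\Big|\int_{(B(x,\e_2)\stm B(x,\e_1))\cap C}k(x,y)\,d\mu\Big|.$$
The choice $\e_2\le\dist(x,C\stm C_{u|_n})$ guarantees $B(x,\e_2)\cap C\subset C_{u|_n}$, so the outer, uncontrolled part $C\stm C_{u|_n}$ lies outside both balls and cancels in the difference; inside $C_{u|_n}=\bigsqcup_j D_j$ (up to a single point) the annulus contains every $D_j$ with $J_1\le j\le m-j_0-J_2$, where $J_1,J_2$ are constants depending only on $r_w,\rho_0,\beta_1,\diam(C)$. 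These contribute at least $(m-j_0-J_1-J_2)\,\eta'$ with one sign, whereas the $O(1)$ blocks clipped by the two spheres contribute a bounded amount, using the uniform per-block estimate $\int_{D_j}|k(x,\cdot)|\,d\mu\le\|\o\|_\infty\,\mu(C\stm C_w)\,(2/\rho_0)^s$ furnished by the separation and AD-regularity. Letting $m\to\infty$ makes the right-hand side exceed $M$, so $T^\ast(1)(x)=\infty$ for $\tmu$-a.e.\ $u$, that is, for $\mu$-a.e.\ $x$.

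The decisive difficulty is precisely this symbolic-to-metric passage: isolating the controlled blocks between two genuine metric balls, discarding only boundedly many of them, and keeping the outer region cancelled. The two ingredients that make it succeed without strong separation are the SOSC-derived inequality $\dist(\pi(u),C\stm C_{u|_n})\gtrsim\diam(C_{u|_n})$, valid along the Birkhoff return times because $\pi(\sg^{\,n}(u))$ is forced deep into $O$, together with the uniform per-block kernel bound, which ensure that only $O(1)$ blocks are lost at the two ends of the run while the number of surviving same-sign blocks tends to infinity.
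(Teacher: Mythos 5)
Your proof is correct, and it rests on the same pillars as the paper's: the scaling identity at the periodic point (each block $D_j$ contributes the same nonzero amount), continuity of the kernel near $x_w$, Birkhoff returns of a.e.\ coding to the cylinders $[w^m]$, and the SOSC-derived separation $\dist(\pi(u),C\setminus C_{u|_n})\gtrsim\diam(C_{u|_n})$ at return times. The packaging, however, differs in two genuine ways. The paper factors the argument through a purely symbolic statement (Proposition \ref{symunb}, valid under the OSC alone, without $x_w\in O$): largeness there is produced by fixing $m_1$ with $m_1\eta>M$ and invoking continuity of the truncated integral over an $M$-dependent cylinder $[w^{m_2}]$ (estimate (\ref{mest})); the passage to metric balls is then a separate comparison, Lemma \ref{comp}, which bounds the error integrals $I_1,I_2$ over the regions $B_1\setminus C_{v|_n}$ and $B_2\setminus C_{v|_{n+m_1|w|}}$. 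You instead apply continuity once, to the one-block integral $f$ on a fixed neighborhood $A$ of $x_w$, gaining sign coherence of all block integrals along a return; largeness then comes from counting same-sign blocks, and your choice of outer radius $\varepsilon_2=r_{u|_n}\beta_1$ inside the protected zone forces $B(x,\varepsilon_2)\cap C\subset C_{u|_n}$, so the outer region cancels exactly and only $O(1)$ clipped blocks remain, each controlled by your uniform per-block bound; this replaces Lemma \ref{comp}. What each approach buys: the paper's factorization yields the symbolic proposition under OSC alone as a byproduct of independent interest, whereas your merged argument is more direct and its error analysis is arguably cleaner, since nothing escapes the outer ball. Two details to polish: take $\varepsilon_2$ slightly smaller than $r_{u|_n}\beta_1$ (the truncations are over complements of closed balls, so points of $C\setminus C_{u|_n}$ at distance exactly $r_{u|_n}\beta_1$ must be kept out of the annulus), and record that $\o$ is bounded — by $0$-homogeneity and compactness of the unit sphere in the proper group $G$ — which your per-block bound uses.
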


\noindent The essential step in the proof of Theorem \ref{unb} is the
following proposition. 

\begin{pr}
\label{symunb}
Let $\S$ be and IFS as in (\ref{gsim2}) which satisfies the OSC and
generates an $s$-dimensional self similar set $C$. Let $k$ an
$s$-homogeneous kernel. If there exists a periodic point $x_w, \ w \in
E^\ast$, such that 
\begin{equation}
\label{int}
\int_{C\stm C_w} k(x_w,y)d \mu (y) \neq 0
\end{equation}
then $\tsy (1) (v)=\infty$ for $\tmu$-a.e. $v \in E^\N$.
\end{pr}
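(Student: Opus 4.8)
The plan is to express every symbolic truncation as an integral which, after the change of variables dictated by the self-similar structure, is controlled by the value of the kernel at the periodic point $x_w=\pi(w^\infty)$, and then to use Birkhoff's Ergodic Theorem, exactly as in Theorem~\ref{divspv}, to spread the non-vanishing at $x_w$ to $\tmu$-almost every coding.

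First I would record the scaling identity for the kernel. Writing $p=|w|$ and $r_w=r_{w_1}\cdots r_{w_p}$, for any similarity $S=\t_q\circ\delta_r$ of ratio $r$ one gets, since $\delta_r$ is a homomorphism, left translations are isometries, and $\o$ is $0$-homogeneous,
\[
S(x)^{-1}\cdot S(y)=\delta_r(x^{-1}\cdot y),\qquad d(S(x),S(y))=r\,d(x,y),
\]
whence $k(S(x),S(y))=r^{-s}k(x,y)$. As $x_w$ is fixed by $S_{w^i}=S_w^{\,i}$, the change of variables $y=S_{w^i}(z)$ together with $\mu(S_{w^i}(A))=r_w^{is}\mu(A)$ gives, for every $i\ge0$,
\[
\int_{C_{w^i}\stm C_{w^{i+1}}}k(x_w,y)\,d\mu(y)=\int_{C\stm C_w}k(x_w,z)\,d\mu(z)=I,
\]
where $I$ is the finite integral from hypothesis~(\ref{int}). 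Since $C\stm C_{w^N}=\bigsqcup_{i=0}^{N-1}(C_{w^i}\stm C_{w^{i+1}})$ and every block carries the same value $I$, they add up without any cancellation, so
\[
F_N(x_w):=\int_{C\stm C_{w^N}}k(x_w,y)\,d\mu(y)=N\,I.
\]
This is precisely what distinguishes the statement from Theorem~\ref{divspv}(i): there it was enough that infinitely many annular integrals stay away from $0$, whereas here the partial sums themselves must diverge, and the fixed-point relation is what forces the $N$ blocks to reinforce rather than cancel.

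I would then transfer this to an arbitrary coding. If $v\in E^\N$ and $\sg^\ell(v)\in[w^N]$, then $v|_{\ell+Np}=(v|_\ell)\,w^N$ and $\pi(v)=S_{v|_\ell}(\pi(\sg^\ell v))$, so the same computation yields
\[
\int_{C_{v|_\ell}\stm C_{v|_{\ell+Np}}}k(\pi(v),y)\,d\mu(y)=\int_{C\stm C_{w^N}}k(\pi(\sg^\ell v),z)\,d\mu(z)=F_N(\pi(\sg^\ell v)).
\]
Using that $F_N(x_w)=NI\neq0$, that $F_N$ is continuous at $x_w$, and that $x_w\in C_{w^M}$ with $\diam(C_{w^M})\to0$, I can fix for each $N$ an integer $M_N\ge N$ so large that $|F_N(\pi(v'))|>\tfrac12 N|I|$ for all $v'\in[w^{M_N}]$. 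Since $\sg$ is ergodic for $\tmu$ and $\tmu([w^{M_N}])>0$, for $\tmu$-a.e.\ $v$ there is an $\ell$ with $\sg^\ell(v)\in[w^{M_N}]\subset[w^N]$; choosing the two truncation levels $\ell$ and $\ell+Np$ in the definition of $\tsy$ then gives $\tsy(1)(v)\ge|F_N(\pi(\sg^\ell v))|>\tfrac12 N|I|$. Intersecting these full-measure sets over all $N\in\N$ shows $\tsy(1)(v)=\infty$ for $\tmu$-a.e.\ $v$.

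The step I expect to be the main obstacle is the continuity of $F_N$ at $x_w$, equivalently the assertion that $x_w$ is bounded away from $C\stm C_{w^N}$, so that the truncated integral is genuinely finite and varies continuously near $x_w$; this is also exactly what allows $\int^\ast$ to be replaced by $\int$ in the relevant terms. When $x_w$ lies in an open set $O$ witnessing the SOSC, as assumed in Theorem~\ref{unb}, the point $x_w$ then lies at positive distance from $C\stm C_{w^N}$ and the continuity is clear. Under bare OSC, however, the closures of $C_{w^N}$ and $C\stm C_{w^N}$ may touch at $x_w$ (for instance if $x_w$ carries a second coding), in which case $I$ or $F_N(x_w)$ could fail to be finite; through the $\int^\ast$ convention this would annihilate the lower bound rather than help, so this integrability and continuity issue is the delicate heart of the argument.
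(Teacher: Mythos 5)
Your argument is correct and is essentially the paper's own proof: the same scaling identity $k(S_v(x),S_v(y))=r_v^{-s}k(x,y)$ at the fixed point, the same observation that the annular pieces $C_{w^i}\setminus C_{w^{i+1}}$ all contribute the identical value so that the truncations at $x_w$ grow linearly, the same continuity step passing from $w^\infty$ to a cylinder $[w^{m_2}]$, and the same application of Birkhoff's Ergodic Theorem, the only cosmetic difference being that you integrate $\mu$ over geometric cells while the paper integrates $\tmu$ over cylinders and converts at the end. The ``delicate heart'' you flag is treated no more carefully in the paper itself --- inequality (\ref{mest}) is justified there only by the phrase ``by the continuity of $k$ away from the diagonal,'' which is rigorous exactly when $x_w$ lies at positive distance from $C\setminus C_w$ (finiteness of the integral in (\ref{int}) being read as part of the hypothesis), and the positive-distance argument is supplied only later, in the proof of Theorem \ref{unb}, where the extra assumption $x_w\in O$ yields (\ref{seper}) --- so your closing reservation is a fair observation about the paper's proof as well, not a defect peculiar to your own.
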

\begin{proof}For simplicity we denote $x=x_w$. Without loss of
  generality we can assume that 
$$\int _{C \setminus C_w} k(x,y)d \mu (y) = \eta >0.$$
Notice that the homogeneity of $\o$ implies that for all $v \in E^\ast$,
\begin{equation}
\label{homome}
\o (S_v (x)^{-1} \cdot S_v (y))=\o(\delta_{r_{i_1}\dots
  r_{i_{|v|}}}(x^{-1}\cdot y))=\o (x^{-1}\cdot y). 
\end{equation}
Hence
\begin{equation}
\label{homome1}
k(S_v(x),S_v(y))=k(x,y)(r_{v_1}\dots v_{|v|})^s. 
\end{equation}
Therefore for all $k \in \N$, after changing variables $y=S_{w^k}(z)$
and recalling that $S_{w^k}(x)=x$, 
\begin{equation*}
\begin{split}
\int_{C_{w^k}\stm C_{w^{k+1}}} k(x,y)d \mu y&=\int_{C_{w^k}\stm C_{w^{k+1}}} \frac{\o(x^{-1}\cdot y)}{d(x,y)^s} d\mu (y)\\
&=\int _{C \stm C_w} \frac{\o(x^{-1}\cdot S_{w^k}(z))}{d(x,S_{w^k}(z))^s} (r_{w_1}\dots r_{w_{|w|}})^{ks} d \mu(z)\\
&=\int _{C \stm C_w} \frac{\o(S_{w^k} (S_{w^k}^{-1}(x))^{-1}\cdot S_{w^k}(z))}{d(S_{w^k} (S_{w^k}^{-1}(x)),S_{w^k}(z))^s} (r_{w_1}\dots r_{w_{|w|}})^{ks} d \mu (z) \\
&=\int _{C \stm C_w} \frac{\o(S_{w^k}^{-1}(x)^{-1}\cdot z)}{d(S_{w^k}^{-1}(x),z)^s} d \mu (z)\\
&=\int _{C \stm C_w}\frac{\o(x^{-1}\cdot z)}{d(x,z)^s} d\mu (z) \\
&= \eta.
\end{split}
\end{equation*}
Let $M$ be an arbitrary positive number and choose $m_1 \in \N$ such
that $m_1\eta >M$. Then 
$$\int_{C\stm C_{w^{m_1}}} k(x,y)d \mu
(y)=\sum_{i=0}^{m-1}\int_{C_{w^{i}}\stm C_{w^{i+1}}} k(x,y)d \mu
(y)>M.$$ 
Therefore by the continuity of $k$ away from the diagonal there exist
$m_2>m_1,$ such that 
\begin{equation}
\label{mest}
\int_{C\stm C_{w^{m_1}}} k(\pi(\tau),y)d \mu (y) >M \  \text{ for all
}\  \tau \in [w^{m_2}].
\end{equation}
Let 
$$A=\{v \in E^\N: \ \text{there exists} \ n \in \N \ \text{such that}
\ \s^n(v)\in [w^{m_2}]\}$$ 
Then as in the proof of Theorem \ref{divspv}, Birkhoff's Ergodic
Theorem implies that $\tmu (A)=1$. For $v \in A$ set 
$$G_v=\{n \in \N:\ \s^n(v) \in [w^{m_2}] \}.$$  
Then $G_v \neq \emptyset$ and for $n \in G_v$ we have
\begin{equation*}
\begin{split}
\int_{[v|_n] \stm [v|_{n+m_1 |w|}]}^\ast & k(\pi(v), \pi(\tau)) d \tmu(\t)=\int^\ast_{v|_n \circ (E^\N \stm [v|_{n+1,\dots,n+m_1 |w|}])} k(\pi(v), \pi(\tau)) d \tmu(\t)\\
&=\int^\ast_{v|_n \circ (E^\N \stm [w^{m_1}])} k(\pi(v), \pi(\tau)) d \tmu(\t).
\end{split}
\end{equation*}
The last equality follows because $\s^n(v) \in [w^{m_2}]$ and $m_2>m_1$. Hence after a change of variables $\tau = v|_n \theta$
\begin{equation}
\begin{split}
\label{maxest}
\int_{[v|_n] \stm [v|_{n+m_1 |w|}]}^\ast & k(\pi(v), \pi(\tau)) d \tmu(\t)=\int^\ast_{E^\N \stm [w^{m_1}]} k(\pi(v), \pi(v|_n \theta)) \ (r_{v_1}\dots r_{v_n})^{s} d \tmu(\theta)\\
&=\int^\ast_{E^\N \stm [w^{m_1}]} k( S_{v|_n}(\pi(\s^n(v))),S_{v|_n}( \pi( \theta))) \ (r_{v_1}\dots r_{v_n})^{s} d \tmu(\theta) \\
&=\int_{E^\N \stm [w^{m_1}]} k( \pi(\s^n(v)), \pi( \theta)) \, d \tmu(\theta)\\
&=\int_{C \stm C_{w^{m_1}}} k(\pi(\s^n(v)),y)\, d \mu (y)>M
\end{split}
\end{equation}
because $\s^n(v) \in [w^{m_2}]$. Since $M$ was arbitrary we deduce that for $\tmu$-a.e. $v \in E^{N}$
$$\tsy(1)(v)=\infty.$$
\end{proof}

\

\begin{proof}[Proof of Theorem \ref{unb}]
Let $X= \overline{O}$. Then there exists $n_0:=n_0(w) \in \N$ and
$c_0:=c_0(w)>0$ such that for all $m \in \N$ 
\begin{equation}
\label{seper}
\dist (S_{w^{m+n_0}}(X), \partial S_{w^m}(X)) \geq c_0 \diam (S_{w^m}(X))
\end{equation}
To see this notice that as $x_w \in O$ there exists $r=r_w>0$ such
that $\dist(B(x_w,r),\partial X)>0 $. Therefore there exists some
$n_0:=n_0(w) \in \N$ such that  
$$\dist(S_{w^{n_0}}(X),\partial X):=d_0>0.$$
Let $c_0:=d_0\diam^{-1}(X)$. Then for all $m \in \N$
\begin{equation*}
\begin{split}
\dist &(S_{w^m}(S_{w^{n_0}}(X)), \partial
S_{w^m}(X))=\dist(S_{w^m}(S_{w^{n_0}}(X)), S_{w^m}(\partial (X)))\\ 
&\quad=(r_{w_1}\dots r_{w_{|w|}})^{m} \dist (S_{w^{n_0}}(X), \partial
(X))\geq (r_{w_1}\dots r_{w_{|w|}})^{m} c_0 \diam (X)\\ 
&\quad=c_0 \diam (S_{w^m}(X)),
\end{split}
\end{equation*}
and (\ref{seper}) follows.
Let $M>0$ be arbitrary and let $m_2, m_1 \in \N$ be as in the proof of
Proposition~\ref{symunb}, that is they satisfy (\ref{mest}). Let  
$$
A'=\{v \in E^\N: \ \text{there exists} \ n \in \N \ \text{such that}
\ \s^n(v)\in [w^{m_2+n_0}]\}
$$ 
Then Birkhoff's Ergodic Theorem implies that $\tmu (A')=1$. For $v \in A'$ set
$$G'_v=\{n \in \N:\ \s^n(v) \in [w^{m_2+n_0}] \}.$$
Then  exactly as in (\ref{maxest}) we get that for all $ v \in A'$ and
for all $n \in G' (v)$ 
\begin{equation}
\label{mest2}
\int_{C_{v|_n} \stm C_{v|_{n+m_1|w|}}}k(\pi(v),y)d \mu (y)=\int_{[v|_n]
  \stm [v|_{n+m_1 |w|}]}  k(\pi(v), \pi(\tau)) d \tmu(\t)>M. 
\end{equation}
\begin{lm}
\label{comp} If $v \in A'$ and $n \in G'_v$, then there exists a
constant $c_1:=c(w)$ such that 
$$
\left|\int_{C_{v|_n} \stm C_{v|_{n+m_1|w|}}}k(\pi(v),y)d \mu
  (y)-\int_{B_1 \stm B_2} k(\pi(v),y) d \mu (y) \right| \leq c_1,
$$
where $B_1=B(\pi(v),2 \diam (C_{v|_n}))$ and $B_2=B(\pi(v),2 \diam
(C_{v|_{n+m_1|w|}}))$ 
\end{lm}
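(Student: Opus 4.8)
The plan is to compare the two integrals by estimating $|k(\pi(v),\cdot)|$ over the symmetric difference of their domains of integration. Since $\mu$ is carried by $C$, only $(B_1\stm B_2)\cap C$ is relevant, and since $\pi(v)\in C_{v|_{n+m_1|w|}}\subset C_{v|_n}$ one has $C_{v|_{n+m_1|w|}}\subset B_2\subset B_1$ and $C_{v|_n}\subset B_1$. An elementary set computation then shows that the two domains disagree only on
\[
R_{\mathrm{in}}=(C_{v|_n}\cap B_2)\stm C_{v|_{n+m_1|w|}}\quad\text{and}\quad R_{\mathrm{out}}=\big((B_1\stm B_2)\cap C\big)\stm C_{v|_n},
\]
so the quantity to be bounded is at most $\int_{R_{\mathrm{in}}}|k(\pi(v),y)|\,d\mu(y)+\int_{R_{\mathrm{out}}}|k(\pi(v),y)|\,d\mu(y)$. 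It thus suffices to bound each of these by a constant depending only on $w$.

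The decisive ingredient is a separation estimate: there is $c=c(w)>0$ with
\[
\dist\big(\pi(v),\,C\stm C_{v|_p}\big)\ge c\,\diam(C_{v|_p})\qquad\text{for }p=n\text{ and }p=n+m_1|w|.\qquad(\star)
\]
To prove $(\star)$, fix $y\in C\stm C_{v|_p}$. By the OSC, $y\in C_u$ for some word $u\ne v|_p$ of length $p$; since $S_u(O)\cap S_{v|_p}(O)=\emptyset$ and $S_{v|_p}(O)$ is open, $C_u\subset\overline{S_u(O)}\subset G\stm S_{v|_p}(O)$, whence $S_{v|_p}^{-1}(y)\in G\stm O$. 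Each $S_i=\t_{q_i}\circ\delta_{r_i}$ is a bijective similarity, so $S_{v|_p}^{-1}$ scales distances by $\rho_p^{-1}$, where $\rho_p=r_{v_1}\cdots r_{v_p}$, and $S_{v|_p}(\pi(\s^p(v)))=\pi(v)$; therefore
\[
d(\pi(v),y)=\rho_p\,d\big(\pi(\s^p(v)),S_{v|_p}^{-1}(y)\big)\ge\rho_p\,\dist\big(\pi(\s^p(v)),G\stm O\big).
\]
This reduces $(\star)$ to a uniform bound $\dist(\pi(\s^p(v)),G\stm O)\ge d_0$, which is precisely what (\ref{seper}) delivers: as $m_2>m_1$, in both cases $\s^p(v)\in[w^{n_0}]$ (indeed $\s^n(v)\in[w^{m_2+n_0}]$ and $\s^{n+m_1|w|}(v)\in[w^{m_2+n_0-m_1}]$), so $\pi(\s^p(v))\in C_{w^{n_0}}\subset S_{w^{n_0}}(X)$, and by the choice of $n_0$ this set lies at distance at least $d_0=c_0\diam(X)$ inside $O$. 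Since $\diam(C_{v|_p})=\rho_p\diam(C)$, $(\star)$ follows with $c=d_0/\diam(C)$.

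With $(\star)$ the remaining estimates are routine. On $R_{\mathrm{out}}$ every $y$ lies in $(C\stm C_{v|_n})\cap B_1$, so $c\,\diam(C_{v|_n})\le d(\pi(v),y)\le 2\diam(C_{v|_n})$; as $\o$ is bounded, $|k(\pi(v),y)|\le\|\o\|_\infty\big(c\,\diam(C_{v|_n})\big)^{-s}$, while the upper regularity $\mu(B(z,r))\le C_\mu r^s$ (valid here from the OSC and the self-similar structure) gives $\mu(R_{\mathrm{out}})\le\mu(B_1)\le C_\mu\big(2\diam(C_{v|_n})\big)^s$. The diameters cancel, leaving $\int_{R_{\mathrm{out}}}|k(\pi(v),y)|\,d\mu(y)\le 2^s\|\o\|_\infty C_\mu\,c^{-s}$. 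The same argument on $R_{\mathrm{in}}$, now with $p=n+m_1|w|$ and $B_2$ in place of $B_1$, gives the identical bound, and $c_1:=2^{s+1}\|\o\|_\infty C_\mu\,c^{-s}$ depends only on $w$.

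The main obstacle is $(\star)$. Unlike in Theorem \ref{divpv}, strong separation is unavailable, so the comparison balls and the cylinders genuinely differ and one must produce a uniform gap between $\pi(v)$ and $C\stm C_{v|_p}$ from the OSC alone. This is exactly the role of the deep periodic tail $\s^n(v)\in[w^{m_2+n_0}]$ together with the boundary-separation inequality (\ref{seper}); once the gap is secured, the passage from cylinders to balls costs only the bounded error $c_1$.
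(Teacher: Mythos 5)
Your proof is correct, and its overall architecture matches the paper's: both reduce the difference of the two integrals to integrals of $|k(\pi(v),\cdot)|$ over the regions where cylinders and balls disagree (your $R_{\mathrm{in}}$ and $R_{\mathrm{out}}$ are subsets of the paper's $B_2 \stm C_{v|_{n+m_1|w|}}$ and $B_1 \stm C_{v|_n}$), and both then rest on the separation estimate $\dist(\pi(v), C \stm C_{v|_p}) \geq c\, \diam(C_{v|_p})$ for $p=n$ and $p=n+m_1|w|$, concluded via $\|\O\|_{L^\infty(\mu)}$ and upper $s$-regularity of $\mu$. Where you genuinely diverge is in the proof of that separation estimate, the paper's (\ref{seper2}). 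The paper argues ``forward'': it includes $C \stm C_{v|_{n+l|w|}}$ into $\overline{X \stm S_{v|_{n+l|w|}}(X)}$ and then passes between $\dist(\cdot, X \stm S(X))$ and $\dist(\cdot, \partial S(X))$ so that the estimate can be transported by $S_{v|_n}$ and (\ref{seper}) invoked. You argue ``backward'': you pull $y \in C \stm C_{v|_p}$ back by $S_{v|_p}^{-1}$, use disjointness of the same-generation copies $S_u(O)$, $u \neq v|_p$, to conclude $S_{v|_p}^{-1}(y) \notin O$, and reduce everything to a positive lower bound on $\dist(\pi(\s^p(v)), G \stm O)$. Your route avoids the boundary-distance identities entirely, which is a real advantage: in a general (possibly totally disconnected) metric group those identities are delicate, as $\partial S_{w^l}(X)$ can even be empty, so your argument is the more robust of the two. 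One imprecision should be fixed: (\ref{seper}) with $m=0$ bounds $\dist(S_{w^{n_0}}(X), \partial X)$, not $\dist(S_{w^{n_0}}(X), G \stm O)$, and since $\partial X \subset G \stm O$ the latter distance could a priori be smaller, so what you need does not literally follow from (\ref{seper}). It does, however, follow from the same one-line argument that produces (\ref{seper}): choose $r>0$ with $B(x_w, 2r) \subset O$; since $x_w \in S_{w^{n_0}}(X)$ and $\diam(S_{w^{n_0}}(X)) \to 0$ as $n_0 \to \infty$, for $n_0$ large enough $S_{w^{n_0}}(X) \subset B(x_w, r)$, whence $\dist(S_{w^{n_0}}(X), G \stm O) \geq r$. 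With that substitution (and your constant $c$ adjusted to $r/\diam(C)$) your proof is complete and, if anything, cleaner than the original.
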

\begin{proof} We have 
$$\cn \stm \cnm=(\cn \stm B_2) \cup ((B_2 \stm \cnm) \cap \cn) $$
and
$$
B_1 \stm B_2=(B_1 \stm (\cn \cup B_2))\cup (\cn \stm B_2),
$$
where the unions are disjoint. Hence
\begin{equation*}
\begin{split}
\int_{\cn \stm \cnm}&k(\pi(v),y)d \mu (y) + \int_{B_1 \stm (\cn \cup
  B_2)}k(\pi(v),y)d \mu (y)\\ 
&= \int_{B_1 \stm B_2}k(\pi(v),y)d \mu (y) + \int_{(B_2 \stm \cnm)
  \cap \cn}k(\pi(v),y)d \mu (y) 
\end{split}
\end{equation*}
and so,
\begin{equation}
\begin{split}
\label{compoper}
&\left| \int_{C_{v|_n} \stm C_{v|_{n+m_1|w|}}}k(\pi(v),y)d \mu
  (y)-\int_{B_1 \stm B_2} k(\pi(v),y) d \mu (y) \right|\\ 
 & \quad \quad \leq\|\O\|_{L^\infty (\mu)}\left( \int_{B_1 \stm \cn}
   d(\pi(v),y)^{-s} d \mu (y) + \int_{B_2 \stm \cnm}d(\pi(v),y)^{-s} d
   \mu  (y)\right)\\ 
 &\quad \quad:= \|\O\|_{L^\infty (\mu)}(I_1+I_2).
\end{split}
\end{equation}
For all integers $0 \leq l \leq m_1$ we have
$$
C\stm \cnl \subset  \bigcup_{\substack{|\t|=n+l|w| \\ \t \neq v|_{n+l
      |w|}}}S_{\t}(C) \subset  \bigcup_{\substack{|\t|=n+l|w| \\ \t \neq
    v|_{n+l |w|}}}S_{\t}(X) \subset \overline{X \stm S_{v|_{n+l|w|}}(X)}.
$$
Hence
\begin{equation}
\label{seper2}
\begin{split}
\dist(\pi(v),C \stm \cnl)  & \geq  \dist(\pi (v), X \stm S_{v|_{n+l |w|}}(X))\\
&= \dist(\pi (v), \partial S_{v|_{n+l |w|}}(X))\\
&=\dist(S_{v|_n}(\pi(\s^n(v))), S_{v|_n}(\partial S_{v|_{n+1, \dots,n+l |w| }}(X)))\\
&= (r_{v_1}\dots r_{v_{n}})\dist(\pi(\s^n(v)), \partial S_{w^l}(X)) \\
&=(r_{v_1}\dots r_{v_{n}})\dist(\pi(\s^n(v)), X \stm S_{w^l}(X)) \\
&\geq c_0 (r_{v_1}\dots r_{v_{n}}) \diam (S_{w^l}(C))\\
&=c_0 \diam (S_{v|_{n+l|w|}}(C)),
\end{split}
\end{equation}
where we used (\ref{seper}) and the fact that $\s^n(v) \in
[w^{m_2+n_0}] \subset [w^{l+n_0}]$ as  $m_2>m_1 \geq l$. 
Applying (\ref{seper2}) for $l=0$ and for $l=m_1$ we obtain that 
$I_1+I_2 \leq c_2$ with some constant $c_2$ and the lemma follows by
(\ref{compoper}) with $c_1:=c_2\|\O\|_{L^\infty (\mu)}$. 
\end{proof}

\

\noindent Since $M$ was an arbitrarily large number Theorem \ref{unb}
follows by (\ref{mest2}) and Lemma \ref{comp}.
\end{proof}

\section{$C^r$-perturbations of Kernels}\label{sec:pert}

\noindent Fix $r\in\{0,1,2,\ldots,\infty\}$. Let $\o: \Rd
\stm{\{0\}}\ra \R$ be a non-trivial $C^r$ and homogeneous function of
degree zero. Equivalently, and this will be more convenient
throughout this section, $\o$ can be treated as a $C^r$ function $\hat
\o$ from the unit sphere $S^{d-1}$ to $\R$. Let $\mathcal{F}^{r}$ be
collection of all such $\o$s.  
Let $\mathcal{S}=\{S_i\}_{i\in E}$ be a separated IFS consisting of
similarities. Let $w\in E^*$ be a finite word and let $\xi_w$ be the
only fixed point of $S_w$. As in the previous sections let $s$ be the
similarity dimension of the limit set $C$. We shall prove the following.

\

\begin{pr} \label{pertpro}
Let $\mathcal{S}=\{S_i\}_{i\in E}$ be a separated IFS consisting of
similarities which generates an $s$-dimensional limit set $C$. For every $r\in\{0,1,2,\ldots,\infty\}$ and every 
finite word $w\in E^*$ the subcollection $\mathcal{F}^r(w)$ of $\mathcal{F}^r$
consisting of all elements $\o$ such that 
\begin{equation}\label{pert1}
\int_{C\setminus C_w}\frac{\o(x-\xi_w)}{|x-\xi_w|^s}\,d\mu(x)\ne 0
\end{equation}
is open in $C^0$ topology, dense in $C^r$ topology if $r$ is finite, and in $C^k$ topology for every finite $k$ if $r=\infty$. In consequence $\mathcal{F}^r(w)$ is open and dense respectively in $C^r$ topology or all $C^k$ topologies.
\end{pr}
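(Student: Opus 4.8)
The plan is to recognize the left-hand side of \eqref{pert1} as the value of a single \emph{linear} functional evaluated at $\o$, and to reduce the whole statement to two properties of that functional. Concretely, I would set
$$
\Lambda_w(\o)=\int_{C\stm C_w}\frac{\o(x-\xi_w)}{|x-\xi_w|^s}\,d\mu(x),
$$
which is linear in $\o$ (equivalently in the associated function $\hat\o$ on $S^{d-1}$), so that $\mathcal{F}^r(w)=\{\o\in\mathcal{F}^r:\Lambda_w(\o)\ne0\}=\Lambda_w^{-1}(\R\stm\{0\})\cap\mathcal{F}^r$. Everything then follows from: (a) $\Lambda_w$ is continuous in the $C^0$-norm $\|\o\|_{C^0}=\sup_{S^{d-1}}|\hat\o|$, and (b) $\Lambda_w$ is not the zero functional.

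For (a), the essential point, and the only place where separation enters, is that the kernel has no singularity on the region of integration. Because $\S$ is separated, $C$ is the disjoint union $\bigcup_{|v|=|w|}C_v$ of compact pieces, so $C\stm C_w=\bigcup_{|v|=|w|,\,v\ne w}C_v$ is itself compact; moreover $C_w\cap C_v=\emptyset$ for each such $v$ (examine the first coordinate where $v$ and $w$ differ and apply separation there together with injectivity of the similarities). Hence $\rho:=\dist(C_w,C\stm C_w)>0$, and since $\xi_w\in C_w$ we get $|x-\xi_w|\ge\rho$ for all $x\in C\stm C_w$, giving the bound
$$
|\Lambda_w(\o)|\le \rho^{-s}\,\mu(C\stm C_w)\,\|\o\|_{C^0}.
$$
Thus $\Lambda_w$ is $C^0$-continuous, and $\mathcal{F}^r(w)$, being the preimage of the open set $\R\stm\{0\}$, is relatively open in the $C^0$ topology.

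For (b), and then for density, I would test $\Lambda_w$ on the constant kernel $\mathbf 1$ (the function $\o\equiv1$, which is $C^\infty$ and $0$-homogeneous): its integrand $|x-\xi_w|^{-s}$ is strictly positive and $\mu(C\stm C_w)>0$, so $\Lambda_w(\mathbf 1)>0$. Density follows from a one-parameter perturbation: given any $\o_0\in\mathcal{F}^r$, put $\o_t=\o_0+t\,\mathbf 1$. Then $\Lambda_w(\o_t)=\Lambda_w(\o_0)+t\,\Lambda_w(\mathbf 1)$ is a non-constant affine function of $t$, hence vanishes for at most one $t$; for every other arbitrarily small $t\ne0$ we have $\o_t\in\mathcal{F}^r(w)$, while $\|\o_t-\o_0\|_{C^k}=|t|\,\|\mathbf 1\|_{C^k}\to0$ as $t\to0$ for every finite $k$ simultaneously. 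This yields density in the $C^r$ topology when $r$ is finite and in every $C^k$ topology when $r=\infty$.

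Finally, the ``in consequence'' clause is immediate: the $C^0$ topology is coarser than every $C^r$ (resp.\ $C^k$) topology, so the $C^0$-openness from (a) upgrades at once to openness in the $C^r$ (resp.\ all $C^k$) topologies, which combined with the density from (b) gives that $\mathcal{F}^r(w)$ is open and dense. I expect the only genuine obstacle to be step (a), namely verifying that separation forces $\dist(C_w,C\stm C_w)>0$: this positive gap is precisely what removes the singularity of the kernel from the domain of integration and makes $\Lambda_w$ bounded in the weak ($C^0$) norm, whereas without it $\Lambda_w$ need not even be $C^0$-continuous and the openness assertion could fail.
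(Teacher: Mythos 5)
Your proof is correct, and its density argument takes a genuinely different route from the paper's. The paper dismisses openness as obvious, and your separation-gap bound $|x-\xi_w|\ge\rho>0$ on $C\stm C_w$, giving $|\Lambda_w(\o)|\le \rho^{-s}\mu(C\stm C_w)\|\hat\o\|_{C^0}$, is precisely what makes it obvious; you are right that this is the one place where separatedness is used. For density, however, the paper does not exploit linearity of the integral in $\o$: it fixes $z\in C\stm C_w$, takes a small spherical cap $U$ containing $H(z)$ where $H(x)=(x-\xi_w)/|x-\xi_w|$, and uses the $C^\infty$ version of Urysohn's Lemma to produce a bump $g\ge 0$ supported in $U$, strictly positive on $U$, with all derivatives up to order $r$ less than $\e$; since $\mu(H^{-1}(U)\stm C_w)>0$, the integral of $\hat\o+g$ strictly exceeds that of $\hat\o$, which settles the only problematic case, namely when the latter vanishes. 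Your argument instead views the integral as a nonzero continuous linear functional $\Lambda_w$ (nonzero because $\Lambda_w(\mathbf 1)>0$) and slides along the line $t\mapsto\o_0+t\mathbf 1$, which meets the hyperplane $\ker\Lambda_w$ in at most one point; this is more elementary (no Urysohn lemma, no derivative estimates, since the perturbing direction is a constant whose $C^k$ norms are all $1$) and treats all $C^k$ topologies simultaneously. The one point you leave implicit, that $\o_0+t\mathbf 1$ is non-trivial and hence belongs to $\mathcal{F}^r$, is automatic: an identically vanishing function is annihilated by $\Lambda_w$, so such $t$ is already excluded by avoiding the single root. What the paper's localized bump buys in exchange for the extra machinery is flexibility: the perturbation can be confined to an arbitrarily small open set of directions attained by $C\stm C_w$ and never decreases $\hat\o$, features not needed for the present statement but which do not follow from your constant-direction perturbation.
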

\begin{proof}
The openess statement is obvious. So, let us deal with denseness. Fix
$\o\in\mathcal{F}^r$. Our task is to find elements of $\mathcal{F}^r$
arbitrarily close to $\o$ in $C^r$ topology for which the integral in
\eqref{pert1} does not vanish. Fix $z\in C\setminus C_w$. Define the function
$H:\R^d\setminus\{0\}\to S^{d-1}$ by
$$
H(x)=\frac{x-\xi_w}{|x-\xi_w|}.
$$
Consider $U$, an open ball contained in $S^{d-1}$ such that $H(z)\in U$. Given
$\e>0$, by the $C^\infty$ version of Urysohn's Lemma there exists a
$C^\infty$ function $g:S^{d-1}\to\R$ such that  
$$
g|_{S^{d-1}\setminus U}=0,  \    \  g|_U>0,
$$
and all the derivatives of $g$ from  order $0$ up to $r$ are less than
$\epsilon$. 
Define $\o^*:\R^d\setminus\{0\}\to\R$ by declaring that $\hat\o^*=\hat\o+g$. Then 
$$
\hat\o^*\ge \hat\o  \  \text{ on }  \  S^{d-1}  \  \text{ and }  \
\hat\o^*> \hat\o  \  \text{ on }  \  U. 
$$
If 
$$
\int_{C\setminus C_w}\frac{\o(x-\xi_w)}{|x-\xi_w|^s}\,d\mu(x)\ne 0,
$$
we are done; there is nothing to do. Otherwise, $\o^*$ is $\epsilon$-close to $\o$ in $C^r$ and, as $\mu(H^{-1}(U)\setminus C_w)>0$, we get
$$
\int_{C\setminus C_w}\frac{\o^*(x-\xi_w)}{|x-\xi_w|^s}\,d\mu(x)
>\int_{C\setminus C_w}\frac{\o(x-\xi_w)}{|x-\xi_w|^s}\,d\mu(x)
>0.
$$
The proof is complete.
\end{proof}

As an immediate corollary of Proposition \ref{pertpro} and Theorem \ref{unb} we have the following. Let $\mathcal{K}^r$ be the collection of $s$-homogeneous kernels $k$ which are $C^r$-away from the origin and let $\mathcal{U}^r$ be the subcollection of $\mathcal{K}^r$ consisting of all kernels $k$ such that $T^*_k(1)(x)=\infty$ for $\ha^{s}$-a.e.

\begin{corollary}
Let $\mathcal{S}=\{S_i\}_{i\in E}$ be a separated IFS consisting of
similarities which generates an $s$-dimensional limit set $C$. For every $r\in\{0,1,2,\ldots,\infty\}$ let $\mathcal{K}^r$ be the collection of $s$-homogeneous kernels $k$ which are $C^r$-away from the origin and let $\mathcal{U}^r$ be the subcollection of $\mathcal{K}^r$ consisting of all kernels $k$ such that $T^*_k(1)(x)=\infty$ for $\ha^{s}$-a.e $x \in C$. Then $\mathcal{U}^r$ is open and dense in the $C^r$-topology of $\mathcal{K}^r$.
\end{corollary}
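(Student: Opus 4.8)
Fix any $r\in\{0,1,2,\ldots,\infty\}$. The plan is to exhibit inside $\mathcal{U}^r$ the explicit open dense family produced by Proposition~\ref{pertpro}, certifying membership in $\mathcal{U}^r$ through Theorem~\ref{unb}. First I would fix once and for all a finite word $w\in E^\ast$ (a single letter $a$ suffices, and since there are at least two maps $C\stm C_a\supseteq C_b\ne\emptyset$ for $b\ne a$), and record that its fixed point $\xi_w$ is a periodic point of $\S$. Every $k\in\mathcal{K}^r$ is an $s$-homogeneous kernel determined by a non-trivial degree-zero homogeneous $\o$ that is $C^r$ away from the origin, i.e.\ by $\hat\o\in C^r(S^{d-1})\stm\{0\}$; restriction to the sphere therefore identifies $\mathcal{K}^r\cong\mathcal{F}^r$ by a homeomorphism intertwining the $C^r$ topology of kernels (away from the diagonal) with the $C^r$ topology of $\hat\o$. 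The key bookkeeping observation is that, in the convention of Definition~\ref{hom} (which governs Theorem~\ref{unb}), one has $x^{-1}\cdot y=y-x$ in $\Rd$, so $\int_{C\stm C_w}k(\xi_w,y)\,d\mu(y)=\int_{C\stm C_w}\tfrac{\o(y-\xi_w)}{|\xi_w-y|^s}\,d\mu(y)$, which is exactly the integral in \eqref{pert1}. Thus the non-vanishing condition defining $\mathcal{F}^r(w)$ coincides with hypothesis \eqref{int} of Theorem~\ref{unb} at the periodic point $x_w=\xi_w$.

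Next I would check that for a separated IFS every remaining hypothesis of Theorem~\ref{unb} is automatic. Separation gives the OSC, indeed the SOSC, and $0<\h^s(C)<\infty$, so $\mu$ is well defined. It remains to produce an SOSC open set $O$ with $\xi_w\in O$. Setting $\delta=\min_{i\ne j}\dist(C_i,C_j)>0$ and taking $O$ to be the open $\delta/3$-neighborhood of $C$, a direct check shows $S_i(O)\subset O$, that the closures $S_i(\overline O)$ are pairwise disjoint, and that $C\subset O$; in particular $\xi_w\in C\subset O$. Hence Theorem~\ref{unb} applies to every kernel satisfying \eqref{int}, yielding $T^\ast_k(1)(x)=\infty$ for $\mu$-a.e., equivalently $\h^s$-a.e., $x\in C$. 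Through the identification of the first paragraph this says precisely that the image of $\mathcal{F}^r(w)$ lies in $\mathcal{U}^r$.

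Finally I would invoke Proposition~\ref{pertpro}: $\mathcal{F}^r(w)$ is open in the $C^0$ topology (a fortiori in the $C^r$ topology) and dense in the $C^r$ topology. Transporting along the homeomorphism $\mathcal{F}^r\cong\mathcal{K}^r$, the family $\mathcal{V}^r:=\{k\in\mathcal{K}^r:\int_{C\stm C_w}k(\xi_w,y)\,d\mu(y)\ne 0\}$ is a $C^0$-open, $C^r$-dense subset of $\mathcal{K}^r$ contained in $\mathcal{U}^r$; openness of $\mathcal{V}^r$ is transparent once one notes that by separation $\dist(\xi_w,C\stm C_w)>0$, so the integrand is bounded and the integral depends $C^0$-continuously on $k$. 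This immediately gives $C^r$-density of $\mathcal{U}^r$. The one step that genuinely needs care is the openness assertion: density of $\mathcal{U}^r$ follows from containing the dense set $\mathcal{V}^r$, but literal openness of the full set $\mathcal{U}^r$ does not follow merely from its containing the open set $\mathcal{V}^r$. The honest formulation, and the one I would adopt, is that $\mathcal{V}^r\subseteq\mathcal{U}^r$ is itself open and dense; this explicit open dense witness family is where the conclusion $T^\ast_k(1)=\infty$ a.e.\ is certified, and it is what the statement ``open and dense'' records. The main obstacle is thus not any single hard estimate but precisely this pair of bookkeeping matters --- matching the kernel and sphere $C^r$-topologies under the correct sign convention, and isolating the genuinely open dense witness family inside $\mathcal{U}^r$.
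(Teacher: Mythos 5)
Your proposal is essentially the paper's own argument: the paper offers no written proof beyond declaring the corollary immediate from Proposition~\ref{pertpro} and Theorem~\ref{unb}, and that is exactly the combination you carry out, with the details the paper leaves implicit (the identification $\mathcal{K}^r\cong\mathcal{F}^r$, the matching of \eqref{pert1} with \eqref{int} at the fixed point $\xi_w$, and the construction of an SOSC open set containing $\xi_w$ for a separated IFS) filled in correctly. Your closing caveat is also well taken and is a genuine defect of the statement rather than of your argument: the cited results only show that $\mathcal{U}^r$ contains the $C^0$-open, $C^r$-dense witness family $\{k:\int_{C\stm C_w}k(\xi_w,y)\,d\mu(y)\ne 0\}$, which yields density of $\mathcal{U}^r$ but not its literal openness, so the corollary as phrased overstates what ``immediately'' follows, and your honest reformulation is the correct reading.
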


\end{document}